\title{On a class of optimal transportation problems with infinitely many marginals\footnote{The author was supported in part by a University of Alberta start-up grant.}}
\author{Brendan Pass\footnote{Department of Mathematical and Statistical Sciences, 632 CAB, University of Alberta, Edmonton, Alberta, Canada, T6G 2G1 pass@ualberta.ca.}}
\begin{document}

\maketitle

\begin{abstract}
We prove existence and uniqueness results for solutions to a class of optimal transportation problems with infinitely many marginals, supported on the real line.  We also provide a characterization of the solution with an explicit formula.  We then show that this result implies an infinite dimensional rearrangement inequality, and use it to derive upper bounds on solutions to parabolic PDE via the Feynman-Kac formula.  This result can be used to provide model independent bounds on the prices of certain derivatives in mathematical finance.  As an another application, we obtain refined phase-space bounds in quantum physics.
\end{abstract}

\section{Introduction}

In this paper we introduce and initiate the investigation of a novel problem - an optimal transportation problem of Monge Kantorovich type with infinitely many marginals.  Given measures (or marginals) $\mu_t$ on $\mathbb{R}$, for all $t \in [0,1]$, and a strictly convex function $h: \mathbb{R} \rightarrow \mathbb{R}$, we look for a stochastic process $X_t$ with continuous sample paths, such that law$X_t =\mu_t$ for all $t\in [0,1]$, which maximizes

\begin{equation*} \tag{$MK_{\infty}$}
E\Big(h\big(\int_{0}^1 X_t dt\big)\Big)
\end{equation*}

Our motivation in studying ($MK_{\infty}$) comes partially from an analogy with more familiar problems with finitely many marginals.  The (two marginal) optimal transportation problem of Monge and Kantorovich is to couple a pair of probability measures together as efficiently as possible.  This problem has many deep connections to a variety of areas of mathematics, such as geometry and PDE, and has found applications in many diverse fields, including economics, fluid mechanics, meteorology, and image processing \cite{V}\cite{V2}.  Given two Borel probability measures $\mu_0$ and $\mu_1$ on smooth manifolds $M_0$ and $M_1$, respectively, and a surplus function $b: M_0 \times M_1 \rightarrow \mathbb{R}$, the problem consists of finding the maximizer of the functional:

\begin{equation*} \tag{$MK_{2}$}
\int_{M_0 \times M_1}b(x_0,x_1)d\gamma
\end{equation*}
among measures $\gamma$ on $M_0 \times M_1$ which project to $\mu_0$ and $\mu_1$ under the canonical projections.  Under mild conditions, a solution $\gamma$ exists.  Under somewhat stronger conditions, one can show that this solution is unique and is concentrated on the graph $\{x_1=T(x_0)\}$ of a function over $x_0$; see Brenier \cite{bren}, Gangbo and McCann \cite{gm}, Gangbo \cite{g}, Caffarelli \cite{caf}, McCann \cite{m}, and Levin \cite{lev}.  In this case, we say that $T$ solves Monge's optimal transportation problem.  Let us note that the class of cost functions satisfying this condition includes $c(x_0,x_1) = h(x_0+x_1)$, where $h$ is strictly convex.

An interesting extension of $(MK_{2})$ is the problem of aligning several distributions with maximal efficiency.  That is, given $m$ Borel probability measures $\mu_{i}$, for $i =1,2,....,m$ on smooth manifolds $M_1,M_2,...,M_m$, respectively, and a surplus function $b: M_1 \times M_2 \times ...\times M_m \rightarrow \mathbb{R}$, maximizing

 \begin{equation*}\tag{$MK_m$}
\int_{M_0 \times M_1 \times ...\times M_m}b(x_1,x_2,...,x_m)d\gamma
\end{equation*}
among measures $\gamma$ on $M_1 \times M_2 \times ...\times M_m$ which project to the $\mu_i$ under the canonical projections, for $i=1,2,...,m$.

For our purposes, it will be useful to have an equivalent formulation of this problem using more probabilistic language.  From this perspective, we look for the $M_0 \times M_1 \times ...\times M_m$ valued random variable $(X_1,X_2,...,X_m)$, such that law$(X_i) = \mu_i$ for all $i=1,2,...,m$, which maximizes the \textit{expectation}:

 \begin{equation*}\tag{$MK_m$}
E(c(X_1,X_2,...,X_m))
\end{equation*}

As in the $m=2$ case, it is straightforward to prove the existence of a maximizer.  The literature on $(MK_m)$, however, is much more limited than that on $(MK_2)$.  The case that is most relevant to us here is when $M_i = \mathbb{R}^n$ for all $i=1,2,...,m$ and $c(x_1,x_2,...,x_m)=h(\sum_{i=1}^mx_i)$, for a strictly convex function $h$, this problem has been studied by Gangbo and Swiech \cite{gs} and Heinich \cite{h}, using a duality result of Kellerer \cite{K}; see also \cite{OR}\cite{KS}\cite{RU} and \cite{RU2}.  In this case, they proved that the optimal measure $\gamma$ is unique and that it is concentrated on the graph $\{x_1,f_2(x_2),....f_m(x_m)\}$ of a function over the first marginal, inducing what is often called a Monge solution.  Since then, a handful of additional results haven been obtained for other surplus functions, by Carlier \cite{C}, Carlier and Nazaret \cite{CN}, and the present author \cite{P}\cite{P1}.

It is natural to ask about the limit as the number of marginals, $m$, goes to infinity.  Here, instead of prescribing $m$ marginals, we prescribe a \textit{continuum} of marginals $\mu_t$ on $\mathbb{R}^n$, for $t \in [0,1]$, and rather than looking for a random variable $(X_1,...,X_m)$, we search for random \textit{curves} $X_t$, (ie, stochastic processes) that satisfy law($X_t) =\mu_t$ for each $t \in [0,1]$.  Heuristically, replacing the cost function $h(\sum_{i=1}^mx_i)$ with $h(\int_0^1x_tdt)$, we obtain problem $(MK_{\infty})$ as the limit of $(MK_m)$ as $m \rightarrow \infty$.

In this paper, we study $(MK_{\infty})$ but restrict our attention to the case $n=1$.  The machinery developed here does not generalize easily to the higher dimensional case; however, in a subsequent paper, we address the higher dimensional case for the quadratic surplus (that is, $h(\cdot) =|\cdot|^2$), using very different techniques \cite{P5}.  In the multi-marginal setting, when $n=1$, Carlier \cite{C} showed that, for a class of cost functions including strictly convex functions of the sum, the optimal maps $f_i:\mathbb{R} \rightarrow \mathbb{R}, i=2,3,...,m$ are all increasing. Our main result here (Theorem \ref{main}) is an infinite marginal analogue; there exists a unique optimal stochastic process $X_t$ and it is concentrated on an increasing family of curves.

When $h(x) = e^{x}$, the reader may recognize expressions of form $(MK_{\infty})$ from the Feynman-Kac representation of solutions to parabolic PDE (the Feynman-Kac expression is actually somewhat more complicated, but see the extension in subsection 3.2).  Our main result implies a \textit{stochastic rearrangement inequality} (Theorem \ref{reeq}); for any stochastic process, $X_t$, $E\Big(h\big(\int_{0}^1 X_t dt\big)\Big)$ can be bounded above by the same expression with the process $X_t$ replaced by a particularly simple process.  Therefore, this result can be used to derive upper bounds on the solutions to parabolic PDE which are straightforward to compute.  Let us briefly mention that upper bounds on expressions of the form $(MK_{\infty})$ can be also be found by Jensen's inequality, but that our bound is always tighter than the Jensen's inequality bound.  We discuss this in more detail in section 4.

Parabolic PDE arise frequently in various applied contexts, including mathematical finance and mathematical physics among many others.  In mathematical finance, for example, the price of a zero coupon bond maturing in one year is given by $E[e^{-\int_0^1 X_tdt}]$; where the stochastic process $X_t$ represents (stochastic) interest rate, given by some model.  As another example, the value of an Asian call option on a stock maturing in one year is $E(h(\int_0^1 X_tdt))$, where $X_t$ is the value of the stock price and $h(x) = \max(0,x-k)$ for some fixed $k$ (the strike price) is piecewise linear.  In some of these models, the expectation itself may be difficult to compute analytically, but the single time marginals can be readily determined and results in section 4 can be used to determine an upper bound on the interest rate simply by doing two 1-dimensional integrals.    

Let us add that, from a financial perspective, the potential interest in these inequalities does not only lie in estimating integrals that are difficult to compute; it can also be used provide information about certain phenomena that is less model dependent than standard financial methods.  Model independence in finance has recently seen quite a bit of attention.  The goal here is to understand the difference between derivative prices arising from models with the same marginals, as is outlined in \cite{bhlp}, \cite{hl} and \cite{ght}.  Note that in these papers, while the authors restrict to discrete time models, they also impose the condition that the process $X_t$ should be a martingale, which is natural in a financial context.  Our optimal process, $X_t$ for $(MK_{\infty})$, is not a martingale; nevertheless, it does provide an upper bound on the prices of appropriate derivatives arising from any model yielding the same marginals, and so may be relevant in their context.

Expressions of the form $E\Big(h\big(\int_{0}^1 X_t dt\big)\Big)$ also arise frequently in mathematical physics.  In this context, upper bounds on these expressions can be used to derive phase space bounds in quantum mechanics.    The best known example is perhaps Lieb's proof of the Cwickel-Lieb-Rosenbljum (CLR) bound, which itself is a special case of Lieb-Thirring inequalities, and can be used to derive the more general case \cite{lieb}.  The Lieb-Thirring inequalities are well known for their crucial role in Lieb and Thirring's famous proof of the stability of matter \cite{liebthirr}.  The general process for obtaining phase-space bounds is outlined nicely in the book by Simon \cite{simon}; very briefly, the key idea is to apply Jensen's inequality to expressions of the form $(MK_{\infty})$, then use Fubini's theorem to rearrange the upper bound into a desirable form.  We show that our result here provides tighter bounds on these quantities.

In the next section, we outline our main assumptions and make our basic definitions.  In section 3, we state and prove our main result.  In the fourth section, we use our main result to derive a rearrangement inequality for stochastic processes, which can then be used to derive upper bounds on solutions to parabolic PDE via the Feynman-Kac formulas.  In the final section, we describe a specific application of this inequality: new phase space bounds in quantum physics.
\section{Preliminaries and assumptions}
In this section, we outline our assumptions and preliminary definitions.  Many of these assumptions can be relaxed, at the cost of more complicated proofs; in subsection 3.2, for example, we discuss two extensions of this theorem, including a relaxation of the first assumption.

Let $\Pi$ be the set of all real valued, measurable stochastic processes $X_t$ with continuous sample paths such that law$X_t =\mu_t$ for all $t \in [0,1]$.  We will assume:
\begin{enumerate}
\item The function $h:\mathbb{R} \rightarrow \mathbb{R}$ is strictly convex and $C^2$ smooth.
\item Each $\mu_t$ is absolutely continuous with respect to Lebesgue measure.
\item The support of each $\mu_t$ is a (possibly unbounded) interval: spt$\mu_t =[a_t,b_t]$, where $a_t$ may be $-\infty$ and $b_t$ may be $\infty$.
\item The path of measures $t \mapsto \mu_t$ is weakly continuous.
\item The \textit{first moments} and \textit{h moments} of $\mu_t$ exist and are integrable in $t$.  That is, 
\begin{equation*}
\int_{0}^1 \big|\int_{\mathbb{R}}x d\mu_t(x)\big|dt <\infty \text{ ,  }\int_{0}^1 \big|\int_{\mathbb{R}}h(x) d\mu_t(x)\big|dt <\infty. 
\end{equation*}
\end{enumerate}

Note that, for any process $X_t \in \Pi$, applying Jensen's inequality twice and using Fubini's theorem yields:
 
 \begin{eqnarray*}
 h(\int_{0}^1 \int_{\mathbb{R}^n}x d\mu_t(x)dt) = h(\int_{0}^1 E(X_t) dt)& = &h(E(\int_{0}^1 X_t dt)) \\
 &\leq & E(h(\int_{0}^1 X_t dt))\\
 & \leq &E(\int_{0}^1h (X_t) dt) =\int_{0}^1E(h(X_t)) dt =\int_{0}^1 \int_{\mathbb{R}} h(x)d\mu_t(x)dt
 \end{eqnarray*}
 
Therefore, assumption 5 ensures that $E(h(\int_{0}^1 X_t dt))$ is finite for each competitor $X_t \in \Pi$.

Our last assumption requires a little more notation, which we develop now.  For each $\lambda \in (0,1)$, define the curve 

\begin{equation*}
x_{\lambda}(t) =F_t^{-1}(\lambda)
\end{equation*}
where $F_t^{-1}$ is the inverse of the cumulative distribution function $F_t$ of $\mu_t$.  Our final assumption can be interpreted as a regularity condition on the curve of measures $\mu_t$, expressed in terms of the curves $x_{\lambda}(t)$:
\begin{enumerate}[resume]
\item The Riemann sums $\sum_{i=1}^Nx_{\lambda} (\frac{i}{N})$ converge to $\int_{0}^{1}x_{\lambda}(t)dt$ as $N \rightarrow \infty$, uniformly in $\lambda$ on compact subsets of $(0,1)$.
\end{enumerate}

Note that weak continuity of the family $\mu_t$ (assumption 4) implies that the curve $t \rightarrow x_{\lambda}(t)$ is continuous and hence Riemann integrable.  The Riemann sums above then converge to the integral, but the convergence is not necessarily uniform in $\lambda$.
 
Now, we define a particular stochastic process in $\Pi$, which, as we will prove in the next section, is actually optimal for $(MK_{\infty})$. 

Our underlying probability space will be the interval $(0,1)$, with Lebesgue measure.  Define the process 
\begin{equation}\label{process}
X^{opt}_t=X^{opt}_t(\lambda) =x_{\lambda}(t), \text{ for } \lambda \in [0,1].  
\end{equation}
That is, the sample paths are the curves $t \mapsto x_{\lambda}(t)$, distributed uniformly in $\lambda$.  Note that these sample paths are increasing in $\lambda$: whenever $\lambda \geq \overline{\lambda}$, $x_{\lambda}(t) >\geq x_{\overline{\lambda}}(t)$ for all $t$.  In addition, it is clear from the construction that law$X^{opt}_t = \mu_t$.

Next, we define some functions which will be useful in the proof of our main result.  For each $t \in [0,1]$, we define a function $g_t:\mathbb{R} \rightarrow \mathbb{R}$ by  
\begin{equation*}
g_t(x)= h'(\int_{0}^{1}x_{\lambda}(r)dr),
\end{equation*}
where $\lambda = \mu_t((-\infty,x)) =F_t(x)$.  We then set

\begin{equation*}
G_t(x) = \int_{x_{\frac{1}{2}}(t)}^x g_t(y)dy
\end{equation*}
As $h$ is convex, $x \mapsto h'(x)$ is increasing.  As $x \mapsto F_t(x) =\lambda$ and $\lambda \mapsto \int_0^1x_{\lambda}(t)dt$ are also increasing, it follows that $x \mapsto g_t(x)$ is an increasing function of $x$ and so $x \mapsto G_t(x)$ is convex.

Finally, as the function $\lambda \mapsto \int_0^1x_{\lambda}(t)dt$ is monotone, its (possibly infinite) limit as $\lambda$ increases to 1 exists: denote $I_{max} = \lim_{\lambda \rightarrow 1^-}\int_0^1x_{\lambda}(t)dt$.  Similarly, define $I_{min} = \lim_{\lambda \rightarrow 0^+}\int_0^1x_{\lambda}(t)dt$.

\section{Main Result}

\subsection{Statement and proof of main result}
Our main result is the following.
\newtheorem{main}{Theorem}[subsection]

\begin{main} 
\label{main}
 The process $X^{opt}_t$ defined in equation (\ref{process}) is the unique maximizer in $(MK_{\infty})$.
\end{main}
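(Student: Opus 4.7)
The plan is to reduce the infinite-marginal variational inequality to a pointwise estimate on continuous paths and then integrate it against any candidate process. Specifically, I would aim to prove the path inequality
\begin{equation*}
h\Bigl(\int_0^1 x(t)\,dt\Bigr) \le \int_0^1 G_t(x(t))\,dt + h\Bigl(\int_0^1 x_{1/2}(t)\,dt\Bigr)
\end{equation*}
for every continuous path $x$ with $\int_0^1 x(t)\,dt \in [I_{min}, I_{max}]$, with equality precisely when $x(\cdot) = x_\lambda(\cdot)$ for some $\lambda \in (0,1)$. Once this is in hand, the theorem follows immediately: for any $X \in \Pi$, Fubini plus the marginal constraint guarantees that $\int_0^1 X_t(\omega)\,dt \in [I_{min}, I_{max}]$ almost surely, so taking expectations upper bounds $E(h(\int_0^1 X_t\,dt))$ by a quantity depending only on the single-time marginals $\mu_t$, not on the coupling.

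To prove the path inequality, I would first establish the calibrating identity
\begin{equation*}
\int_0^1 G_t(x_\lambda(t))\,dt \;=\; h(I(\lambda)) - h(I(1/2)), \qquad I(\lambda) := \int_0^1 x_\lambda(r)\,dr.
\end{equation*}
Formally, expanding $G_t(x_\lambda(t)) = \int_{x_{1/2}(t)}^{x_\lambda(t)} g_t(y)\,dy$, changing variables $y = x_\mu(t)$ using strict monotonicity of $F_t$ (from assumption 2), and interchanging the $t$ and $\mu$ integrals yields $\int_{1/2}^\lambda h'(I(\mu))\,I'(\mu)\,d\mu$, which integrates by the chain rule. Making this rigorous is the main obstacle: differentiability of $\mu \mapsto x_\mu(t)$ is only guaranteed almost everywhere, and the swap of integrals against a Stieltjes measure has to be justified. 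This is exactly where assumption 6 enters, as replacing the inner $t$-integral by a Riemann sum with control uniform in $\mu$ on compacts of $(0,1)$ allows the desired limit and interchange to be carried out.

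With the calibration in hand, the path inequality reduces to a tangent-line estimate. The function $G_t$ is convex because its derivative $g_t(x) = h'(I(F_t(x)))$ is increasing as a composition of monotone maps, and strict convexity of $h$ together with absolute continuity of $\mu_t$ upgrade this to strict convexity of $G_t$ on the support of $\mu_t$. For any admissible path $x$, pick $\lambda_0$ with $I(\lambda_0) = \int_0^1 x(t)\,dt$ and apply
\begin{equation*}
G_t(x(t)) \;\ge\; G_t(x_{\lambda_0}(t)) + h'(I(\lambda_0))\bigl(x(t) - x_{\lambda_0}(t)\bigr).
\end{equation*}
Integrating in $t$, the linear correction vanishes by the choice of $\lambda_0$, and the calibrating identity handles the remaining term, yielding the target path inequality.

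Uniqueness is then immediate from strict convexity of $G_t$: the tangent-line inequality is strict unless $x(t) = x_{\lambda_0}(t)$ for every $t$, so if a process $X \in \Pi$ saturates the bound almost surely, its sample paths must lie on the family $\{x_\lambda\}_{\lambda \in (0,1)}$. Writing $X_t(\omega) = x_{\Lambda(\omega)}(t)$ and using $\mathrm{law}(X_t) = \mu_t = \mathrm{law}(F_t^{-1}(U))$ for $U$ uniform forces $\Lambda$ to be uniformly distributed, so $X$ coincides in law with $X^{opt}$.
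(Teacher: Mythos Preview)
Your argument is correct and takes a genuinely different route from the paper. The paper establishes the pathwise maximality of the curves $x_\lambda$ for the functional $\mathcal{F}(\tilde x)=h(\int_0^1\tilde x(t)\,dt)-\int_0^1 G_t(\tilde x(t))\,dt$ by \emph{discretizing}: it passes to the $N$-marginal problem with surplus $h(\frac{1}{N}\sum x_{t_i})$, invokes Carlier's monotonicity theorem together with Kellerer duality to obtain potentials $\phi^N_{t_i}$, and then uses assumption~6 (uniform convergence of Riemann sums for $x_\lambda$) to show $\sum_i\phi^N_{t_i}\to\int_0^1 G_t\,dt$. Uniqueness is then obtained by a separate first-variation argument. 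You instead prove the pathwise inequality directly from the calibrating identity $\int_0^1 G_t(x_\lambda(t))\,dt=h(I(\lambda))-h(I(1/2))$ combined with the tangent-line inequality for the convex $G_t$, and read off uniqueness from strict convexity of $G_t$. Your approach is more elementary and self-contained: it never appeals to the finite-marginal theory or to the existence of Kantorovich potentials. In fact your calibration identity can be proved without assumption~6 at all, via the sandwich
\[
h'(I(\lambda_1))\bigl(x_{\lambda_2}(t)-x_{\lambda_1}(t)\bigr)\le G_t(x_{\lambda_2}(t))-G_t(x_{\lambda_1}(t))\le h'(I(\lambda_2))\bigl(x_{\lambda_2}(t)-x_{\lambda_1}(t)\bigr)
\]
integrated in $t$ and telescoped over a fine partition of $[1/2,\lambda]$; so your invocation of assumption~6 there is unnecessarily cautious. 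What the paper's approach buys is an explicit identification of $G_t$ as the limit of the finite-dimensional dual potentials, making the duality structure transparent; what yours buys is a cleaner, shorter proof that sidesteps the approximation machinery entirely. The one point to tighten is your claim that $\int_0^1 X_t(\omega)\,dt\in[I_{\min},I_{\max}]$ almost surely: this follows because for each $t$ one has $X_t\in[a_t,b_t]$ a.s., whence by Fubini $X_t(\omega)\in[a_t,b_t]$ for a.e.\ $t$ for a.e.\ $\omega$, and then integrate---but this is exactly the same implicit step the paper makes when it restricts Lemma~\ref{ineq2} to curves lying in $\mathrm{spt}(\mu_t)$.
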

The proof requires several Lemmas.
 
\newtheorem{dual}[main]{Lemma}
\begin{dual}\label{dual}
For any process $X_t \in \Pi$, $E (\int_{0}^{1}G_t(X_t)dt)  =\int_{0}^{1}\int_{\mathbb{R}}G_t(x)d\mu_t(x)dt $
\end{dual}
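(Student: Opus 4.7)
The identity is a Fubini-type interchange. For each fixed $t\in[0,1]$, because $X_t$ has law $\mu_t$, we have the pointwise-in-$t$ identity $E(G_t(X_t))=\int_{\mathbb{R}}G_t(x)\,d\mu_t(x)$; integrating over $t$ already produces the right hand side of the claim. The content of the lemma is therefore to pull the time integral inside the expectation on the left hand side via Fubini's theorem, applied to the function $(t,\omega)\mapsto G_t(X_t(\omega))$ on the product of $[0,1]$ with the underlying probability space. The plan is to justify the interchange by verifying the absolute integrability
\[
\int_0^1\int_{\mathbb{R}}|G_t(x)|\,d\mu_t(x)\,dt<\infty.
\]

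To establish this bound I would combine monotonicity of $g_t$ with convexity of $h$. Write $I(\lambda):=\int_0^1 x_\lambda(r)\,dr$, so $g_t(x_{1/2}(t))=h'(I(1/2))$ is a constant independent of $t$. Since $g_t$ is increasing and $G_t(x_{1/2}(t))=0$, bracketing the integrand in the definition of $G_t$ yields the crude sandwich
\[
|G_t(x)|\leq(|g_t(x)|+|h'(I(1/2))|)\,|x-x_{1/2}(t)|.
\]
Next, the tangent line inequality for the convex function $h$ at the point $y=I(F_t(x))$, where by construction $h'(y)=g_t(x)$, evaluated at the arguments $x$ and $x_{1/2}(t)$ and subtracted, gives
\[
g_t(x)(x-x_{1/2}(t))\leq h(x)-h(x_{1/2}(t)),
\]
while monotonicity of $g_t$ alone produces the matching one-sided bound $g_t(x)(x-x_{1/2}(t))\geq h'(I(1/2))(x-x_{1/2}(t))$. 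Together these dominate $|G_t(x)|$ by $|h(x)|+|h(x_{1/2}(t))|+C|x-x_{1/2}(t)|$ with $C=2|h'(I(1/2))|$.

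To conclude, I would integrate these bounds against $d\mu_t\,dt$. The terms involving $h(x)$ and $|x-x_{1/2}(t)|$ are integrable by assumption 5, while the constant pieces $|h(x_{1/2}(t))|$ and $|x_{1/2}(t)|$ are bounded functions of $t$ on $[0,1]$: weak continuity of $\mu_t$ together with absolute continuity of each $\mu_t$ makes $F_t$ continuous and strictly increasing on its support and hence renders $t\mapsto x_{1/2}(t)=F_t^{-1}(1/2)$ continuous on the compact interval $[0,1]$. With absolute integrability in hand, Fubini's theorem delivers the lemma, and measurability of the integrand $(t,\omega)\mapsto G_t(X_t(\omega))$ follows from continuity of the sample paths of $X_t$ and joint measurability of $(t,x)\mapsto G_t(x)$.

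The main obstacle I anticipate is the correct choice of tangent point for the convexity inequality, namely $y=I(F_t(x))$: this choice is precisely what makes $h'(y)$ coincide with $g_t(x)$, so that the inequality $g_t(x)(x-x_{1/2}(t))\leq h(x)-h(x_{1/2}(t))$ trades the potentially unbounded growth of $g_t$ (which can blow up as $\lambda\to 0,1$ when $I_{min}$ or $I_{max}$ is infinite) for growth of $h(x)$, where the $h$-moment hypothesis in assumption 5 applies.
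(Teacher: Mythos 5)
Your core argument --- the pointwise identity $E(G_t(X_t))=\int_{\mathbb{R}}G_t(x)\,d\mu_t(x)$ for each $t$ followed by Fubini to exchange $E$ and $\int_0^1 dt$ --- is exactly the paper's proof, which the paper states as ``a simple application of Fubini's theorem'' without verifying any integrability hypothesis. Your instinct to verify absolute integrability goes beyond the paper, but the key step of that verification is wrong. You claim $g_t(x)(x-x_{1/2}(t))\leq h(x)-h(x_{1/2}(t))$ from ``the tangent line inequality at $y=I(F_t(x))$, evaluated at $x$ and $x_{1/2}(t)$ and subtracted.'' Convexity gives $h(x)\geq h(y)+h'(y)(x-y)$ and $h(x_{1/2}(t))\geq h(y)+h'(y)(x_{1/2}(t)-y)$; both inequalities point the same way, so subtracting them yields nothing. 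The usual gradient inequality $h'(x)(x-x_{1/2}(t))\leq h(x)-h(x_{1/2}(t))$ would require the derivative to be taken \emph{at} $x$, but here $g_t(x)=h'\bigl(I(F_t(x))\bigr)$ and $I(F_t(x))=\int_0^1 x_{F_t(x)}(r)\,dr$ can be far from $x$. Indeed the claimed bound is false in general: take $h=\exp$ and a family for which, at the given time $t$, $x_\lambda(t)$ is close to $x_{1/2}(t)$ while the time average $I(\lambda)$ is very large; then $g_t(x)=e^{I(\lambda)}$ makes the left side enormous while $h(x)-h(x_{1/2}(t))$ stays small. Consequently your domination $|G_t(x)|\leq |h(x)|+|h(x_{1/2}(t))|+C|x-x_{1/2}(t)|$ does not hold, and the Fubini hypothesis is not verified by this route.

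Two further cautions. First, assumption 5 places the absolute values \emph{outside} the inner integrals ($\int_0^1|\int x\,d\mu_t|\,dt<\infty$ and $\int_0^1|\int h\,d\mu_t|\,dt<\infty$), so it does not by itself give $\int_0^1\int|h(x)|\,d\mu_t\,dt<\infty$ or $\int_0^1\int|x|\,d\mu_t\,dt<\infty$ unless, e.g., $h$ is bounded below; a strictly convex $h$ need not be. Second, if you want a genuine integrability argument, the natural move is to integrate in $t$ \emph{along the curves}: writing $\int_{\mathbb{R}}|G_t(x)|\,d\mu_t(x)=\int_0^1|G_t(x_\lambda(t))|\,d\lambda$ and using that $\kappa\mapsto x_\kappa(t)$ is increasing, the $t$-integral of $G_t(x_\lambda(t))$ telescopes to $h(I(\lambda))-h(I(1/2))$, which is the quantity one should try to control by the $h$-moments (via Jensen, $h(I(\lambda))\leq\int_0^1 h(x_\lambda(t))\,dt$); this is where any honest verification has to live, rather than in a pointwise bound on $G_t(x)$. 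As written, the extra step you added is the part of your proposal that fails; the interchange itself is the same as the paper's.
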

\begin{proof}
The result is a simple application of Fubini's theorem:
 \begin{eqnarray*}
  E (\int_{0}^{1}G_t(X_t)dt)  & = & \int_{0}^{1}E(G_t(X_t))  dt\\
& =& \int_{\mathbb{R}} \int_{0}^{1}G_t(x) d\mu_t(x) dt
 \end{eqnarray*}
\end{proof}
\newtheorem{ineq}[main]{Lemma}
\begin{ineq}\label{ineq}
 For each $\lambda \in (0,1)$, the curve $x_{\lambda}(\cdot)$ maximizes the functional 
\begin{equation}
 \mathcal{F}(\tilde{x}(\cdot)) = h(\int_0^1 \tilde{x}(t)dt) - \int_0^1G_t(\tilde{x}(t))dt \label{funct}
\end{equation}
among all continuous curves $\tilde{x}(\cdot)$ on $[0,1]$, with $\tilde{x}(t) \in (a_t,b_t)$ for all $t$.
\end{ineq}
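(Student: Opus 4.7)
The approach is variational. The Euler--Lagrange equation for an interior critical point of $\mathcal{F}$ is $g_t(\tilde{x}(t)) = h'(\int_0^1 \tilde{x}(s)\,ds)$, and the identity $g_t(x_\mu(t)) = h'(Y(\mu))$, with $Y(\mu) := \int_0^1 x_\mu(r)\,dr$, shows that every curve $x_\mu$ is such a critical point. The plan is to combine this with convexity of $G_t$ to upgrade $x_\lambda$ from a critical point to a global maximizer, and to verify along the way that $\mathcal{F}(x_\mu)$ is independent of $\mu$.

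For any admissible $\tilde{x}$, set $Y := \int_0^1 \tilde{x}(t)\,dt$. Under assumptions 2--4, $\mu \mapsto Y(\mu)$ is continuous and strictly increasing on $(0,1)$ with range $(I_{min}, I_{max})$, and an admissible curve cannot saturate the endpoints, so I can choose $\mu_0 \in (0,1)$ with $Y(\mu_0) = Y$. Convexity of $G_t$ (noted in the paper) gives the pointwise tangent bound
$$G_t(\tilde{x}(t)) \;\geq\; G_t(x_{\mu_0}(t)) + g_t(x_{\mu_0}(t))\bigl(\tilde{x}(t) - x_{\mu_0}(t)\bigr).$$
Integrating in $t$ and using that $g_t(x_{\mu_0}(t)) = h'(Y(\mu_0)) = h'(Y)$ is constant in $t$, the linear term equals $h'(Y)(Y - Y(\mu_0)) = 0$, so $\int_0^1 G_t(\tilde{x}(t))\,dt \geq \int_0^1 G_t(x_{\mu_0}(t))\,dt$, i.e.\ $\mathcal{F}(\tilde{x}) \leq \mathcal{F}(x_{\mu_0})$. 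Separately, the change of variable $\nu = F_t(y)$ inside $G_t(x_\mu(t)) = \int_{x_{1/2}(t)}^{x_\mu(t)} g_t(y)\,dy$ (with $g_t(x_\nu(t)) = h'(Y(\nu))$ and $dy = \partial_\nu x_\nu(t)\,d\nu$), followed by Fubini applied to the resulting constant-sign double integral over $(t,\nu)$, yields
$$\int_0^1 G_t(x_\mu(t))\,dt \;=\; \int_{1/2}^{\mu} h'(Y(\nu))\,Y'(\nu)\,d\nu \;=\; h(Y(\mu)) - h(Y(1/2)).$$
Substituting back, $\mathcal{F}(x_\mu) = h(Y(\mu)) - [h(Y(\mu)) - h(Y(1/2))] = h(Y(1/2))$, independent of $\mu$. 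Combining the two, $\mathcal{F}(\tilde{x}) \leq \mathcal{F}(x_{\mu_0}) = \mathcal{F}(x_\lambda)$, as claimed.

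The main obstacle is rigorously justifying the change-of-variable/Fubini step for $\int_0^1 G_t(x_\mu(t))\,dt$, since $Y$ is only known to be monotone (hence a.e.\ differentiable) and $\nu \mapsto x_\nu(t)$ is only monotone in $\nu$ for each fixed $t$. However, both iterated integrals have the same sign on $\nu \in [1/2,\mu]$, so Tonelli's theorem legitimates the swap, and the identity $Y'(\nu) = \int_0^1 \partial_\nu x_\nu(t)\,dt$ holds in the Lebesgue sense by monotone convergence. A minor side issue is the edge case $Y \in \{I_{min}, I_{max}\}$, which is ruled out by the strict interiority hypothesis $\tilde{x}(t) \in (a_t, b_t)$ combined with compactness of $[0,1]$ and the weak continuity of $\mu_t$.
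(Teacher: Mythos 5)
Your argument is essentially correct, and it takes a genuinely different route from the paper. The paper discretizes time, invokes Carlier's one-dimensional multi-marginal result and Kantorovich duality for the $N$-marginal problem with surplus $h(\frac{1}{N}\sum_i x_{t_i})$, reads off the dual potentials' derivatives from the first-order conditions, and passes to the limit $N\to\infty$ using assumption 6 on uniform convergence of Riemann sums. You instead argue directly: pick $\mu_0$ with $Y(\mu_0)=\int_0^1\tilde{x}(t)dt$ (your justification via strict interiority, compactness and weak continuity is exactly the paper's own bound $c\le\lambda_s\le d$ plus the intermediate value theorem), use the subgradient inequality for the convex $G_t$ at $x_{\mu_0}(t)$, note that $g_t(x_{\mu_0}(t))=h'(Y(\mu_0))$ is constant in $t$ so the linear term integrates to zero, and conclude $\mathcal{F}(\tilde{x})\le\mathcal{F}(x_{\mu_0})$; constancy of $\lambda\mapsto\mathcal{F}(x_\lambda)$ then finishes. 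This is more elementary and self-contained: it uses neither Carlier's theorem, nor the dual potentials $\phi^N_{t_i}$, nor assumption 6, and all integrals involved are finite since $\tilde{x}$ and $x_{1/2}$ are continuous on $[0,1]$ and $g_t$ is bounded on the relevant range of quantiles.

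The one step that does not hold as written is your evaluation of $\int_0^1 G_t(x_\mu(t))dt$. Writing $dy=\partial_\nu x_\nu(t)\,d\nu$ and $\int_{1/2}^{\mu}h'(Y(\nu))Y'(\nu)d\nu=h(Y(\mu))-h(Y(1/2))$ presumes that $\nu\mapsto x_\nu(t)=F_t^{-1}(\nu)$ and $\nu\mapsto Y(\nu)$ are absolutely continuous. Assumptions 2--3 give absolute continuity of $F_t$, \emph{not} of $F_t^{-1}$: if the density of $\mu_t$ vanishes on a positive-measure subset of $[a_t,b_t]$ (a fat Cantor set, say), then $F_t^{-1}$ has a nontrivial singular part, the a.e.\ derivative misses mass, and the fundamental theorem of calculus for $h\circ Y$ fails; also the appeal to ``constant sign'' for Tonelli is not right, since $h'$ need not have a sign on $[Y(1/2),Y(\mu)]$ (boundedness of $h'\circ Y$ on that compact range, together with finiteness of the measures, is what justifies the swap). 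The identity you want is nevertheless true, and the repair is routine: interpret the substitution as the pushforward of Lebesgue measure under $F_t$, i.e.\ $\int_{x_{1/2}(t)}^{x_\mu(t)}g_t(y)dy=\int_{[1/2,\mu]}h'(Y(\nu))\,d_\nu x_\nu(t)$ as a Lebesgue--Stieltjes integral; swap with Fubini; observe that averaging the measures $d_\nu x_\nu(t)$ over $t$ gives $dY(\nu)$ (check on intervals); and push $dY$ forward under the continuous increasing map $Y$ to obtain $\int_{Y(1/2)}^{Y(\mu)}h'(u)du=h(Y(\mu))-h(Y(1/2))$. Alternatively, you can avoid the change of variables entirely: applying your tangent inequality between two quantile curves in both directions gives $h'(Y(\mu))(Y(\lambda)-Y(\mu))\le\int_0^1[G_t(x_\lambda(t))-G_t(x_\mu(t))]dt\le h'(Y(\lambda))(Y(\lambda)-Y(\mu))$, the same two bounds hold for $h(Y(\lambda))-h(Y(\mu))$ by convexity of $h$, and telescoping over a fine partition of $[\mu,\lambda]$ forces $\mathcal{F}(x_\lambda)=\mathcal{F}(x_\mu)$. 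With either repair your proof is complete.
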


\begin{proof}

Our strategy will be to approximate these integrals by Riemann sums, and exploit known results on the corresponding optimal transportation problem with finitely many marginals.
For a positive integer $N$, set $t_i = \frac{i}{N}$, for $i=1,2,...,N$ and consider the $N$-marginal optimal transportation problem with marginals $\mu_{t_i}$ and surplus function 
\begin{equation*}
b(x_{t_1}, x_{t_2},...,x_{t_N})=h(\frac{\sum_{i=1}^Nx_{t_i}}{N}).
\end{equation*}
Note that this surplus function satisfies the strictly two monotone condition in \cite{C} (essentially that $\frac{\partial^2 b}{\partial x_{t_i}\partial x_{t_j}}>0$ for all $i \neq j$); by a result of Carlier \cite{C} (extended to unbounded domains in \cite{P4}), the solution is concentrated on the set 
\begin{equation*}
 \{(x_{\lambda}(t_1), x_{\lambda}(t_2),...,x_{\lambda}(t_N)): \lambda \in (0,1)\}.
\end{equation*}
It is well known that there exists a solution $(\phi^N_{t_1}, \phi^N_{t_2},...,\phi^N_{t_N})$ to the \textit{Kantorovich dual problem }\cite{K}.  These functions satisfy:

\begin{equation*}
\sum_{i=1}^N\phi^N_{t_i}(x_{t_i}) \geq h(\frac{\sum_{i=1}^Nx_{t_i}}{N})
\end{equation*} 
with equality on the set $\{(x_{\lambda}(t_1), x_{\lambda}(t_2),...,x_{\lambda}(t_N)): \lambda \in (0,1)\}$.

Furthermore, due to the convexity of $h$, the functions $\phi^N_{t_i}$ are all convex, and due to assumptions 2 and 3, they are differentiable and satisfy the first order condition
\begin{equation*}
\frac{1}{N}h\prime(\frac{\sum_{i=1}^N x_{\lambda}(t_i)}{N}) =  \phi^{N}_{t_i} \prime (x_{\lambda}(t_i)), 
\end{equation*}
for all $\lambda \in (0,1)$.  

Up to an additive constant, we can assume that each $\phi^N_{t_i}(x_{\frac{1}{2}}(t_i)) = 0$

Now, let $t \rightarrow \tilde{x}(t)$ be any continuous curve such that $\tilde{x}(t) \in (a_t,b_t)$ for all $t \in [0,1]$. Then we have, for any $\lambda$:

\begin{equation} \label{finite}
h(\frac{\sum_{i=1}^N\tilde {x}(t_i)}{N}) - \sum_{i=1}^N \phi_{t_i}^N(\tilde{x}(t_i)) \leq h(\frac{\sum_{i=1}^N x_{\lambda}(t_i)}{N}) - \sum_{i=1}^N \phi_{t_i}^N(x_{\lambda}(t_i)) 
\end{equation}

For each $s \in [0,1]$, set $\lambda_s =F_s(\tilde{x}(s))$.  We then have $\lambda_s \in (0,1)$ and $x_{\lambda_s}(s) = \tilde{x}(s)$.  By assumptions 2 and 4, the map $s \mapsto \lambda_s$ is continuous.  Therefore, there exists some $0<c\leq d<1$ such that $c\leq \lambda_s \leq d$ for all $0\leq s\leq1$.

 Therefore, by the monotonicity of $\lambda \mapsto x_{\lambda}(t)$, we have $x_{c}(t) \leq x_{\lambda_s}(t) \leq x_{d}(t)$, and so, for all $s \in [0,1]$,

\begin{equation*}
I_c:=\int_0^1 x_{c}(t) dt \leq \int_0^1x_{\lambda_s}(t)dt \leq\int_0^1  x_{d}(t)dt=:I_d
\end{equation*}

Therefore,  $s \mapsto \int_0^1 x_{\lambda_s}(t)dt$ is bounded above and below; choose $K$ such that $\big|h'(x)\big| \leq K$ whenever $I_c-1\leq x \leq I_d+1$

Now, for each point $x_{t_i} \in (a_{t_i},b_{t_i})$ such that $\lambda :=F_{t_i}(x_{t_i}) \in [c,d]$, we have
\begin{eqnarray}
|\phi^N_{t_i}\prime(x_{t_i}) - \frac{1}{N} g_{t_i}(x_{t_i})|& =& \frac{1}{N}\big|h'(\frac{\sum_{j=1}^Nx_{\lambda}(t_j)}{N}) -h'(\int_{0}^1 x_{\lambda}(t)dt)\big|\\
& \leq & K  \frac{1}{N}\big|\frac{\sum_{j=1}^Nx_{\lambda}(t_j)}{N} -\int_{0}^1 x_{\lambda}(t)dt\big| \label{derivative}
\end{eqnarray}
 
Choose $\epsilon >0$. By uniform convergence of the Riemann sums of $x_{\lambda}(t)$ (assumption 6), we can choose $N$ so large that we have 
\begin{equation}
\big|\frac{\sum_{i=1}^Nx_{\lambda}(t_i)}{N} -\int_{0}^1 x_{\lambda}(t)dt\big| < \epsilon \label{riesums}
\end{equation}
for all $\lambda \in [c,d]$.

Now, the continuous function $t \mapsto \tilde{x}(t) -x_{\frac{1}{2}}(t)$ is bounded on $[0,1]$: 
\begin{equation}
|\tilde{x}(t)-x_{\frac{1}{2}}(t)|\leq M, \label{xbound}
\end{equation}
and so integrating $\phi^N_{t_i}\prime (x_{t_i})$ and $g_{t_i}(x_{t_i})$ from $x_{\frac{1}{2}}(t_i)$ to $\tilde{x}(t_i)$ and combining inequalities (\ref{derivative}), (\ref{riesums}), and (\ref{xbound}) yields 
\begin{eqnarray*}
 |\phi^N_{t_i}(\tilde{x}(t_i)) - \frac{1}{N}G_{t_i}(\tilde{x}(t_i))|= &\leq  &  \int_{x_{\frac{1}{2}}(t_i)}^{\tilde{x}(t_i)}|\phi^{N\prime}_{t_i}(x) - \frac{1}{N} g_{t_i}(x)| dx \\
 &\leq& KM \frac{1}{N} \epsilon
\end{eqnarray*}

We then have 

\begin{equation*}
|\sum_{i=1}^N\phi^N_{t_i}(\tilde{x}(t_i))  - \sum_{i=1}^N\frac{1}{N}G_{t_i}(\tilde{x}(t_i))| \leq KM \epsilon
\end{equation*}

Now, the term $\sum_{i=1}^N\frac{1}{N}G_{t_i}(\tilde{x}_{t_i})$ converges to $\int_{0}^{1}G_t(\tilde{x}(t))dt$ as $N \rightarrow \infty$ and so the above inequality implies that $\sum_{i=1}^N\phi^N_{t_i}(\tilde{x}_{t_i})$ converges to $\int_{0}^{1} G_t(\tilde{x}(t))dt$ as well.  Similarly, $\sum_{i=1}^N\phi^N_{t_i}(x_{\lambda}(t_i))$ converges to $\int_{0}^{1} G_t(x_{\lambda}(t))dt$.  As $h(\frac{\sum_{i=1}^N\tilde {x}_{t_i}}{N})$ converges to $h(\int_{t=0}^1\tilde{ x}(t)dt)$ and $h(\sum_{i=1}^Nx_{\lambda}(t_i))$ converges to $h(\int_{t=0}^1 x_{\lambda}(t)dt)$ the result now follows by taking the limit of (\ref{finite}).

\end{proof}
\newtheorem{ineq2}[main]{Lemma}
\begin{ineq2}\label{ineq2}
 For each $\lambda \in (0,1)$, the curve $x_{\lambda}(\cdot)$ maximizes the functional 
\begin{equation}
 \mathcal{F}(\tilde{x}(\cdot)) = h(\int_0^1 \tilde{x}(t)dt) - \int_0^1G_t(\tilde{x}(t))dt \label{funct}
\end{equation}
among all continuous curves $\tilde{x}(t)$ on $[0,1]$, with $\tilde{x}(t) \in spt(\mu_t)$ for all $t$.
\end{ineq2}

\begin{proof}
By the preceding lemma, we only need to consider the case where $\tilde{x}(t) = b_t$ or $a_t$ for some $t$.  We will assume here that $\tilde{x}(t) = b_t$ for some $t$, but $a_t <\tilde{x}(t)$ for all $t$ (the proofs of the other cases are similar).

First, suppose that $\lim_{x \rightarrow I_{max}^+} h'(x) >0$.  This implies that $\lim_{x \rightarrow b_t}g_t(x) >0$ for each $t$.  Then, for $\epsilon >0$, consider the set

\begin{equation*}
A_{\epsilon} = \{t: \tilde{x}(t) -\epsilon \in (a_t,b_t)\} \cap \{t: g_t(\tilde{x}(t) -\epsilon)>0\}
\end{equation*}
As an open subset of $[0,1]$, $A_{\epsilon}$ can be written as a countable union of disjoint intervals, $A_{\epsilon} =\cup_{i}I_i$.  For each $i$, let $q_i(t): I_i \rightarrow \mathbb{R}$ be a continuous function such that $q_i(t) =0$ on the boundary of $I_i$ and $0< q_i(t) <1$ on the interior of $I_i$.  We define the function

\begin{eqnarray*}
\tilde{x}^{\epsilon}(t) &= &\tilde{x}(t) -\epsilon q_i(t) , \text{ if } t \in I_i \text{ for some } i\\
\tilde{x}^{\epsilon}(t) &= &\tilde{x}(t), \text{ otherwise.}
\end{eqnarray*}

Note that $\tilde{x}^{\epsilon}(t) \leq \tilde{x}(t)$ everywhere, and the inequality is strict for $t \in A_{\epsilon}$.  As  $x \mapsto g_t(x)$ is monotone increasing and  $g_t(\tilde{x}(t) -\epsilon)>0$ on $A_{\epsilon}$, we have $g_t(x) >0$ for $ \tilde{x}(t) -\epsilon <x<\tilde{x}(t)$.  Thus, $\frac{dG_t}{dx}(x)=g_t(x)>0$, which implies $G_t(\tilde{x}^{\epsilon}(t))$ increases as $\epsilon$ decreases. Furthermore, $|\tilde{x}^{\epsilon}(t) - \tilde{x}(t)| \leq \epsilon$, so $\tilde{x}^{\epsilon}(t)  \rightarrow \tilde{x}(t)$ as $\epsilon \rightarrow 0$.  By continuity, $G_t(\tilde{x}^{\epsilon}(t))  \rightarrow G_t(\tilde{x}(t))$.

As the points where $\tilde{x}_t =b_t$ all lie in $A_{\epsilon}$, we have $a_t<\tilde{x}^{\epsilon}(t) <b_t$ everywhere. Therefore, Lemma \ref{ineq}
 
 \begin{equation*}
 h(\int_0^1 \tilde{x}^{\epsilon}(t)) - \int_0^1G_t(\tilde{x}^{\epsilon}(t))dt \leq  h(\int_0^1 x_{\lambda}(t)) - \int_0^1G_t(x_{\lambda}(t))dt
\end{equation*}

Taking the limit as $\epsilon \rightarrow 0$, using the monotone convergence theorem on $\int_0^1 \tilde{x}^{\epsilon}(t)dt$ and $\int_0^1G_t(\tilde{x}^{\epsilon}(t))dt$ and the continuity of $h$ yields the desired result.

Finally, if  $\lim_{x \rightarrow I_{max}^+} h'(x) \leq 0$, then $g_t(x)<0$ for all $x \in (a_t,b_t)$.  A similar argument to the above implies that $\epsilon \mapsto G_t(\tilde{x}^{\epsilon}(t))$ as $\epsilon$ decreases and applying the monotone convergence theorem again yields the desired result.

\end{proof}
\newtheorem{stineq}[main]{Lemma}
\begin{stineq}\label{stineq}
The curves $x_{\lambda}(t)$ are the \emph{only} maximizers of the functional  $\mathcal{F}(\tilde{x}(\cdot))$  in Lemma \ref{ineq2}.
\end{stineq}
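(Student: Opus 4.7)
The plan is to leverage strict convexity of $h$ together with strict convexity of each auxiliary function $G_t$, combined with Lemma \ref{ineq2}, to rigidly identify any maximizer. Let $\tilde x(\cdot)$ be a maximizer of $\mathcal F$, set $\tilde I := \int_0^1 \tilde x(t)dt$, and write $I_\lambda := \int_0^1 x_\lambda(r)dr$. Since each $\mu_t$ is absolutely continuous with support $[a_t,b_t]$, the CDF $F_t$ is strictly increasing on its support, so $\lambda \mapsto x_\lambda(t) = F_t^{-1}(\lambda)$ is strictly increasing for every fixed $t$; integrating in $t$ shows $\lambda \mapsto I_\lambda$ is strictly increasing on $(0,1)$. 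In the generic case $\tilde I \in (I_{\min}, I_{\max})$, select the unique $\lambda^* \in (0,1)$ with $I_{\lambda^*} = \tilde I$.

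By Lemma \ref{ineq2}, both $\tilde x$ and $x_{\lambda^*}$ attain the common maximum of $\mathcal F$, so $\mathcal F(\tilde x) = \mathcal F(x_{\lambda^*})$; combined with $h(\tilde I) = h(I_{\lambda^*})$, this gives $\int_0^1 G_t(\tilde x(t))dt = \int_0^1 G_t(x_{\lambda^*}(t))dt$. Convexity of $G_t$ (with $G_t'=g_t$) yields the pointwise bound
\begin{equation*}
G_t(\tilde x(t)) \geq G_t(x_{\lambda^*}(t)) + g_t(x_{\lambda^*}(t))\bigl(\tilde x(t) - x_{\lambda^*}(t)\bigr),
\end{equation*}
and the key observation is that $g_t(x_{\lambda^*}(t)) = h'(I_{\lambda^*})$ is \emph{constant in $t$}. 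Integrating therefore produces
\begin{equation*}
0 = \int_0^1 \bigl[G_t(\tilde x(t)) - G_t(x_{\lambda^*}(t))\bigr]dt \;\geq\; h'(I_{\lambda^*})(\tilde I - I_{\lambda^*}) = 0,
\end{equation*}
so the convexity inequality must be an equality for almost every $t$.

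To conclude, note that $g_t(x) = h'(I_{F_t(x)})$ is \emph{strictly} increasing on $[a_t,b_t]$ because $h'$, $\lambda \mapsto I_\lambda$, and $F_t$ are all strictly monotone on their respective domains. Hence $G_t$ is strictly convex on $[a_t,b_t]$, and since $x_{\lambda^*}(t) \in (a_t,b_t)$ is an interior point, the identity $\int_{x_{\lambda^*}(t)}^{\tilde x(t)}[g_t(y) - g_t(x_{\lambda^*}(t))]dy = 0$ forces $\tilde x(t) = x_{\lambda^*}(t)$ for a.e.\ $t$; continuity of both curves upgrades this to every $t$. The boundary cases $\tilde I \in \{I_{\min}, I_{\max}\}$ force $\tilde x$ to coincide everywhere with the extreme envelope of the supports, and can be dispatched by a short direct argument, analogous to the perturbation used in Lemma \ref{ineq2}. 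The main obstacle is in verifying that $g_t$ is strictly monotone across the full interior $(a_t,b_t)$, which requires exploiting all three sources of strictness (in $h$, in $\lambda \mapsto I_\lambda$, and in $F_t$) simultaneously --- this is what ultimately drives uniqueness.
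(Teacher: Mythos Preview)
Your proof is correct and takes a genuinely different route from the paper's argument, though both share the same opening move: pick $\lambda^*$ so that $I_{\lambda^*}=\tilde I$ and exploit the fact that $g_t(x_{\lambda^*}(t))=h'(I_{\lambda^*})$ is constant in $t$.

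From that point the paper proceeds variationally: it differentiates $\mathcal F(\tilde x+\epsilon y)$ at $\epsilon=0$ to obtain the first-order inequality
\[
\int_0^1\Big[h'\Big(\int_0^1\tilde x(r)\,dr\Big)-g_t(\tilde x(t))\Big]y(t)\,dt\le 0,
\]
and then argues by contradiction --- if $\tilde x(t_0)\neq x_{\lambda^*}(t_0)$, a localized nonnegative perturbation $y$ supported near $t_0$ makes the integrand strictly positive, violating the inequality. You instead bypass the variational calculus entirely: from $\mathcal F(\tilde x)=\mathcal F(x_{\lambda^*})$ and $h(\tilde I)=h(I_{\lambda^*})$ you deduce $\int_0^1 G_t(\tilde x(t))\,dt=\int_0^1 G_t(x_{\lambda^*}(t))\,dt$, integrate the subgradient inequality for $G_t$, and use strict convexity of $G_t$ (equivalently, strict monotonicity of $g_t$) to force pointwise equality.

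Your approach is somewhat more elementary --- it is pure convex analysis with no variational derivative or contradiction scaffolding --- and it makes transparent exactly where the three sources of strictness (in $h'$, in $\lambda\mapsto I_\lambda$, and in $F_t$) are consumed. The paper's variational argument, on the other hand, is a bit more local: it only needs monotonicity of $g_s$ near the point of disagreement, and it produces the Euler--Lagrange identity $h'(\tilde I)=g_t(\tilde x(t))$ explicitly, which could be independently useful. Your treatment of the boundary case $\tilde I\in\{I_{\min},I_{\max}\}$ is slightly sketchy, but no more so than the paper's, which simply asserts that such a $\lambda$ can be chosen without comment.
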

\begin{proof}
To show that no other continuous curve $\tilde{x}$ can be a maximizer, we use a variational argument.  Assume that $\tilde{x}(\cdot)$ is a curve such that $\tilde{x}(t) \in spt(\mu_t)$ for all $t$,  maximizing (\ref{funct}).  Let $y(t)$ be a continuous function such that $\tilde{x}(t) + \epsilon y (t)) \in spt(\mu_t)$ for sufficiently small $\epsilon>0$.  We then have:

\begin{eqnarray}
0>\frac{dF( \tilde{x}(t) + \epsilon y (t))}{d\epsilon^{+}}\Big|_{\epsilon=0}&=&\frac{d}{d\epsilon^+}\Big|_{\epsilon=0}\Big[h\Big(\int_0^1 \tilde{x}(t) +\epsilon y (t)\Big)dt - \int_0^1G_t(\tilde{x}(t)+\epsilon y (t))dt\Big]\nonumber \\
&=&h'\Big(\int_0^1 \tilde{x}(t)dt\Big)\int_0^1y(t)dt - \int_0^1g_t(\tilde{x}(t))y (t)dt\nonumber\\
&=&\int_0^1 \Big[h'\Big(\int_0^1 \tilde{x}(r)dr\Big) - g_t(\tilde{x}(t))\Big]y (t)dt \label{decrease}
\end{eqnarray}
Note that we have used Lebesgue's dominated convergence theorem on the last term.

Now, choose $\lambda$ such that $\int_0^1x_{\lambda}(s)ds = \int_0^1\tilde{x}(s)ds$; we will show that $x_{\lambda}(t) = \tilde{x}(t)$ for all $t$.  The proof is by contradiction; assume $x_{\lambda}(t) \neq \tilde{x}(t)$ for some fixed $t$.  Suppose $x_{\lambda}(t) > \tilde{x}(t)$ (the case $x_{\lambda}(t) < \tilde{x}(t)$ is similar and is omitted).  Then by continuity, $x_{\lambda}(s) > \tilde{x}(s)$ in some neighbourhood $U$ of $t$.  By monotonicity of $x \mapsto g_s(x)$, this implies that $g_s(x_{\lambda}(s)) > g_s(\tilde{x}(s))$ in $U$ 

Now, as $ \tilde{x}(s) <x_{\lambda}(s) \leq b_s$ on $U$, we may choose $y(s)>0$ on $U$ and $y(s) = 0$ elsewhere and have $\tilde{x}(s) + \epsilon y(s) \in spt(\mu_s)$ everywhere.  Then we have, for $s \in U$
\begin{eqnarray*}
\Big[h'\Big(\int_0^1 \tilde{x}(r)dr\Big) - g_s(\tilde{x}(s))\Big]y (s) &=&\Big[h'\Big(\int_0^1 x_{\lambda}(r) dr\Big) -  g_s(\tilde{x}(s))\Big]y (s) \\
& >& \Big[h'\Big(\int_0^1 x_{\lambda}(r) dr\Big) - g_s(x_{\lambda}(s) )\Big]y (s) \\
& =& \Big[h'\Big(\int_0^1 x_{\lambda}(r) dr\Big) - h'\Big(\int_0^1 x_{\lambda}(r) dr\Big)\Big]y (s)\\
&=& 0
\end{eqnarray*}
As $y(s)=0$ outside of $U$, this yields 
\begin{equation*}
\int_0^1 \Big[h'\Big(\int_0^1 \tilde{x}(r)dr\Big) - g_s(\tilde{x}(s))\Big]y (s)ds >0
\end{equation*}

This contradicts inequality (\ref{decrease}), completing the proof.
 
\end{proof}

We are now in a position to prove Theorem \ref{main}.

\begin{proof}

It is clear from the definition that $X_t^{opt} \in \Pi$; that is, that law$X^{opt}_t = \mu_t$.  
Now,  by Lemma \ref{ineq2}, given any process $Y_t \in \Pi$ with single time marginals $\mu_t$, we have, for each individual sample path and each $\lambda$,

\begin{eqnarray*}
h(\int_{0}^1 Y_tdt) & \leq & h(\int_0^1 x_{\lambda}(t)) - \int_0^1G_t(x_{\lambda}(t))dt +\int_0^1G_t(Y_t)dt)\\
&=&M +\int_0^1G_t(Y_t)dt
\end{eqnarray*}
where $M$ denotes the minimal value of the functional $\mathcal{F}$ in Lemma \ref{ineq2}.  Note that we have equality if and only if the sample path $Y_t =x_{\lambda}(t)$ for some $\lambda$, by Lemma \ref{stineq}. Taking expectations and using Lemma \ref{dual} yields 
\begin{eqnarray*}
E(h(\int_{0}^1 Y_t dt)) &\leq& M  +E(\int_0^1G_t(Y_t)dt)\\
&=&M+\int_0^1\int_{\mathbb{R}}G_t(x)d\mu_t(x)dt.
\end{eqnarray*}
and we have equality if and only if $Y_t = x_{\lambda}(t)$ for some $\lambda$, almost surely.  Now, if we interpret a stochastic process as a measure on the set of continuous functions, it is clear that only one measure with marginals $\mu_t$ can be supported on the set $\{x_{\lambda}(\cdot): \lambda \in [0,1]\}$; namely, the optimizer $X^{opt}_t$.  This completes the proof.
\end{proof}

\subsection{Extensions of the main theorem}
In this subsection we present two extensions of our main theorem, designed specifically with potential applications in mind.

The first extension is a relaxation of the smoothness and strict convexity of $h$. One motivation for this result is to find an upper bound on the price of an Asian option, whose value is given by $E[h(\int_0^1X_tdt)]$, for a particular process $X_t$ with $h(x) =\max(0, ax-b)$ (ie, a non smooth, convex but not strictly convex surplus function).  In this case, we potentially lose uniqueness in our main theorem.
 
\newtheorem{extension2}{Theorem}[subsection]
\begin{extension2}
Suppose that $h$ is convex.  Assume conditions 2-6, and in addition assume that the \emph{second moments} of $\mu_t$ exist and are integrable, that is, $\int_0^1\int_{\mathbb{R}}x^2d\mu_t(x) dt< \infty$.  Then the process $X^{opt}_t$ defined in section 2 is optimal for $(MK_{\infty})$.

\end{extension2}

\begin{proof}
Let $h_n$ be a sequence of smooth, strictly convex functions such that $h_n(x) \geq h_{n+1}(x)$, $h_n(x) \rightarrow h(x)$  pointwise and 
\begin{equation*}h(x) \leq h_n(x) \leq h(x) +1 +x^2,
\end{equation*}
Then $h_n$ satisfies assumptions 1 and 5, and therefore, for any $Y_t \in \Pi$ we have, by Theorem \ref{main}

\begin{equation*}
E\Big(h_n\big(\int_{0}^1 Y_t dt\big)\Big) \leq E\Big(h_n\big(\int_{0}^1 X^{opt}_t dt\big)\Big)
\end{equation*}

Now, by the monotone convergence theorem, $E\Big(h_n\big(\int_{0}^1 Y_t dt\big)\Big)$ converges to $E\Big(h\big(\int_{0}^1 Y_t dt\big)\Big)$, and similarly $E\Big(h_n\big(\int_{0}^1 X^{opt}_t dt\big)\Big)$ converges to $E\Big(h\big(\int_{0}^1 X^{opt}_t dt\big)\Big)$, implying the desired result.
\end{proof}

Our second extension concerns coupling a stochastic process $X_t$ with prescribed single time marginals and a real valued random variable $Z$ with a prescribed law, to maximize $E[h(\int_0^1X_tdt)Z]$.  The motivation here is the Feynman-Kac formula; expressions of this form represent solutions to parabolic PDE and in the next section, we use our extension here to prove upper bounds on these solutions.   Our extension here holds when $Z$ is positive almost surely.  If $Z$ is merely non-negative almost surely, the process we construct is still optimal, but we possibly lose uniqueness.  

Let $\mu_t$ be a curve of measures, satisfying the conditions in the second section.  Let $\nu$ be a measure on the upper half line $\mathbb{R}_{+} = \{y >0\}$, absolutely continuous with respect to Lebesgue measure.  Then consider the problem of finding a pair $(X_t, Z)$, where $X_t$ is a continuous stochastic process, with law$X_t =\mu_t$ and $Z$ a random variable with law$Z =\nu$, maximizing

\begin{equation*} \tag{$MK_{\infty+1}$}
E\Big(h\big(\int_{0}^1 X_t dt\big)Z\Big)
\end{equation*}

Recall the stochastic process $X_t^{opt}$ from the section 2 and define the random variable $Z^{opt}=Z^{opt}(\lambda)$ on the same underlying probability space $[0,1]$ implicitly by
\begin{equation*}
 \nu(-\infty,Z^{opt}(\lambda)) =\lambda
\end{equation*}
Then the pair $(X^{opt}_t, Z^{opt})$ satisfies law$X^{opt}_t=\mu_t$ and law$Z^{opt}=\nu$.  We also define 

\begin{equation*}
\overline{g}(z)= h'(\int_{0}^{t}x_{\lambda}(t)dt)z_{\lambda},
\end{equation*}
where $\lambda = \nu((-\infty,z_{\lambda}))$.  We then set

\begin{equation*}
\overline{G}(z) = \int_{z^{1/2}}^z g_t(y)dy
\end{equation*}
\newtheorem{extension1}[extension2]{Theorem}

\begin{extension1}\label{extension1}
Assume conditions 1-6 from section 2 and that the measure $\nu$  on $(0,\infty)$ is absolutely continuous with respect to Lebesgue measure.  Then the pair $(X^{opt}_t,Z^{opt})$ is the unique optimizer for ($MK_{\infty+1}$).
\end{extension1}
\begin{proof}
The proof is similar to the proof of the main theorem, and is only sketched here.  The key adjustment is in Lemma \ref{ineq}, where we replace $h(\sum_{i=1}^Nx_{t_i})$ with $h(\sum_{i=1}^Nx_{t_i})z$, where $z>0$.  This surplus function is again strictly two monotone, and so the result of Carlier \cite{C} applies to the $N+1$ marginal optimal transportation problem with this surplus function and marginals $\mu_{t_1},\mu_{t_2},...\mu_{t_N},\nu$. Letting $\phi_{t_1}^N,\phi_{t_2}^N,...\phi^N_{t_N},u^N$ solve the dual problem, we obtain, for any curve $\tilde{x}(t)$,  point $\tilde{z}>0$ and $\lambda \in (0,1)$:

 \begin{equation*} 
h(\frac{\sum_{i=1}^N\tilde {x}(t_i)}{N})\tilde{z} - \sum_{i=1}^N \phi_{t_i}^N(\tilde{x}(t_i)) -u^N(\tilde{z}) \leq h(\frac{\sum_{i=1}^N x_{\lambda}(t_i)}{N})z_{\lambda} - \sum_{i=1}^N \phi_{t_i}^N(x_{\lambda}(t_i)) -u^N(z_{\lambda}).
\end{equation*}
From the first order conditions arising in the $N+1$ marginal problem, we have $u^{N\prime} (z) = h^{\prime} (\frac{\sum_{i=1}^N x_{\lambda}(t_i)}{N})z_{\lambda}$, where $\lambda = \nu((-\infty,z_{\lambda}))$.  Much like in the proof of Lemma \ref{ineq}, one can use this to show that as $N \rightarrow \infty$, $u^N(\tilde{Z}) \rightarrow \overline{G}(\tilde{Z})$.   As in the proof of Lemma \ref{ineq}, we can take the limit as $N \rightarrow \infty$ and obtain 
 \begin{equation*} 
h(\int_0^1 \tilde{x}(t)dt)\tilde{y} -  \int_0^1\overline{G}_t(\tilde{x}(t))dt -u(\tilde{y}) \leq h(\int_0^1x_{\lambda}(t)dt)\tilde{y} - \int_0^1\overline{G}_t(x_{\lambda}(t))dt -u(y_{\lambda}).
\end{equation*}

Here, $\overline{G}_t: = \int_{x_{\frac{1}{2}}(t)}^x g_t(y)dy$ is defined as in section 2, except that we take $\overline{g}_t(x)= h'(\int_{0}^{1}x_{\lambda}(r)dr)z_{\lambda}$, for $\lambda = F_t(x)$, whereas in section 2 we took $g(x)= h'(\int_{0}^{1}x_{\lambda}(r)dr)$.  

The remainder of the proof is analogous to the proof of the main theorem and preceding lemmas in subsection 3.1 and is omitted.
\end{proof}

\section{Rearrangement inequalities and applications}

\subsection{A rearrangement inequality}
Carlier observed that a solution to the multi-marginal problem in $1$-dimension yields a refinement of the Hardy-Littlewood inequality \cite{C}.  Here, we use our main theorem to develop an analogous infinite dimensional rearrangement inequality of Hardy-Littlewood type.  Given a stochastic process $Y_t$, set $\mu_t=$law$Y_t$.  We can then find the process $X^{opt}_t$ corresponding to these marginals, as in section 2 (that is, a monotone rearrangement of $Y_t$) and, assuming the marginals and convex $h$ satisfy assumptions 1-6, Theorems \ref{main} and \ref{extension1} yield the following result, which can be viewed as an infinite dimensional version of the Hardy-Littlewood inequality
\newtheorem{reeq}{Proposition}[subsection]
\begin{reeq}\label{reeq}
Let $Y_t$ be a stochastic process whose marginals $\mu_t =$law$(Y_t)$ satisfy conditions 1-6.  Let $Z$ be a random variable on $\mathbb{R}^+$ and assume $\nu =$law$(Z)$ is absolutely continuous with respect to Lebesgue.  Let $(X^{opt}_t, Z^{opt})$ be the optimal process corresponding to these marginals, defined in section 2 ($X^{opt}_t$) and subsection 3.2 ($Z^{opt}$). We then have
\begin{eqnarray*} 
E(h(\int_{0}^{1}Y_tdt)) &\leq& E(h(\int_{0}^{1}X^{opt}_tdt)) \\
&=&\int_{0}^{1}h(\int_{0}^{1}x_{\lambda}(t)dt)d\lambda
\end{eqnarray*} 
and
\begin{eqnarray*} 
E(h(\int_{0}^{1}Y_tdt)Z) &\leq& E(h(\int_{0}^{1}X^{opt}_tdt)Z^{opt}) \\
&=&\int_{0}^{1}h(\int_{0}^{1}x_{\lambda}(t)dt)z_{\lambda}d\lambda
\end{eqnarray*} 
\end{reeq}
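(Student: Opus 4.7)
The plan is to treat Proposition \ref{reeq} as essentially a direct corollary of Theorems \ref{main} and \ref{extension1}, reinterpreted as a rearrangement statement. Under the hypotheses, $Y_t$ has marginals $\mu_t$ satisfying conditions 2--6 (with $h$ satisfying 1), so $Y_t$ is automatically an admissible competitor in $(MK_\infty)$, and the pair $(Y_t,Z)$ is automatically an admissible competitor in $(MK_{\infty+1})$. The only real content left is to unwind the definition of $X^{opt}_t$ (and $Z^{opt}$) into the closed-form integral expressions.

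For the first inequality, I would first observe that $Y_t \in \Pi$ by the marginal hypothesis, so Theorem \ref{main} applies and yields
$$E\bigl(h\bigl(\int_0^1 Y_t\,dt\bigr)\bigr) \leq E\bigl(h\bigl(\int_0^1 X^{opt}_t\,dt\bigr)\bigr).$$
To produce the explicit formula, I would then recall from section 2 that $X^{opt}_t$ lives on the probability space $\bigl((0,1),\text{Lebesgue}\bigr)$ and is given pointwise by $X^{opt}_t(\lambda)=x_\lambda(t)$. Applying Fubini to exchange the $\lambda$-expectation with the inner $t$-integral (justified by assumption 5, which was already used in the preliminaries to bound $E(h(\int_0^1 X_t\,dt))$) produces $\int_0^1 h\bigl(\int_0^1 x_\lambda(t)\,dt\bigr)\,d\lambda$.

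For the second inequality, the pair $(Y_t,Z)$, with whatever joint law it happens to carry, has prescribed marginals $\mu_t$ and $\nu$, and $\nu$ is absolutely continuous on $\mathbb{R}_+$ by hypothesis. Theorem \ref{extension1} then applies and directly gives
$$E\bigl(h\bigl(\int_0^1 Y_t\,dt\bigr)Z\bigr) \leq E\bigl(h\bigl(\int_0^1 X^{opt}_t\,dt\bigr)Z^{opt}\bigr).$$
Since $Z^{opt}(\lambda)=z_\lambda$ is defined on the same probability space $(0,1)$ as $X^{opt}_t$, the same Fubini argument yields the closed form $\int_0^1 h\bigl(\int_0^1 x_\lambda(t)\,dt\bigr)z_\lambda\,d\lambda$.

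Because all of the technical work has already been done in Theorems \ref{main} and \ref{extension1}, there is no substantive obstacle. The two points warranting mild care are (a) verifying admissibility, which is immediate from the marginal hypotheses, and (b) justifying the swap of integrals producing the explicit formula; the latter follows from the finiteness estimate already displayed in section 2 under assumption 5.
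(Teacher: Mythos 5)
Your proposal is correct and matches the paper's treatment: Proposition \ref{reeq} is stated there as a direct consequence of Theorems \ref{main} and \ref{extension1}, exactly as you argue, with the hypotheses guaranteeing that $Y_t$ (resp.\ $(Y_t,Z)$) is an admissible competitor. One small point: no Fubini exchange is needed for the closed-form equality, since $\int_0^1 X^{opt}_t(\lambda)\,dt=\int_0^1 x_\lambda(t)\,dt$ is already a function of $\lambda$ alone and the expectation on the probability space $\bigl((0,1),\text{Lebesgue}\bigr)$ is by definition the integral $\int_0^1(\cdot)\,d\lambda$, so the formulas $\int_0^1 h\bigl(\int_0^1 x_\lambda(t)\,dt\bigr)\,d\lambda$ and $\int_0^1 h\bigl(\int_0^1 x_\lambda(t)\,dt\bigr)z_\lambda\,d\lambda$ follow immediately.
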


It is interesting to note that Jensen's inequality also yields upper bounds on quantities of the form $E(h(\int_{0}^{1}Y_tdt))$, and these upper bounds are often used in finance and physics (see the examples in the subsequent section).  Recall that for each sample path $Y_t$ of a stochastic process and convex $h$, Jensen's inequality says:

\begin{equation*}
h(\int_{0}^1Y_tdt) \leq \int_0^1 h(Y_t)dt
\end{equation*}
so that, for any stochastic process $Y_t$, we have:
\begin{equation*}
E(h(\int_{0}^1Y_tdt)) \leq E(\int_0^1 h(Y_t)dt)
\end{equation*}

We now prove our bound (\ref{reeq}) is \textit{tighter} than the bound coming from Jensen's inequality.
\newtheorem{beatjen}[reeq]{Proposition}
\begin{beatjen}\label{beatjen}
Given a stochastic process $Y_t$, let $X_t^{opt}$ be the optimal process from section 2, with marginals $\mu_t = $law$(Y_t)$.  Then:
\begin{equation*}
E(h(\int_{0}^1X^{opt}_tdt))\leq E(\int_0^1 h(Y_t)dt)
\end{equation*}

\end{beatjen}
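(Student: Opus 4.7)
The plan is to exploit the fact that $X^{opt}_t$ and $Y_t$ share the same single-time marginals $\mu_t$, so that the right-hand side of the desired inequality can be re-expressed in terms of $X^{opt}_t$ itself, after which a pathwise Jensen's inequality on $X^{opt}_t$ closes the argument.

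First I would use Fubini's theorem together with law$X^{opt}_t=$law$Y_t=\mu_t$ to write
\begin{equation*}
E\Bigl(\int_0^1 h(Y_t)\,dt\Bigr)=\int_0^1 \int_{\mathbb{R}} h(x)\,d\mu_t(x)\,dt = E\Bigl(\int_0^1 h(X^{opt}_t)\,dt\Bigr),
\end{equation*}
which is legitimate since assumption 5 guarantees the double integral is absolutely convergent.

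Next, I would apply the ordinary (one-dimensional) Jensen's inequality pathwise to $X^{opt}_t$. For each $\lambda\in[0,1]$, the sample path $t\mapsto x_\lambda(t)$ is a continuous (hence integrable) function on $[0,1]$, so convexity of $h$ yields
\begin{equation*}
h\Bigl(\int_0^1 x_\lambda(t)\,dt\Bigr) \leq \int_0^1 h(x_\lambda(t))\,dt.
\end{equation*}
Since the underlying probability space of $X^{opt}_t$ is $(0,1)$ with Lebesgue measure, integrating this inequality over $\lambda$ gives exactly $E(h(\int_0^1 X^{opt}_t\,dt)) \leq E(\int_0^1 h(X^{opt}_t)\,dt)$. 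Chaining this with the Fubini identity above yields the claim.

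There really is no substantive obstacle here; the entire content of the proposition is that Jensen's inequality is still available pathwise for $X^{opt}_t$, and that the bound it produces coincides with the Jensen bound for the original process $Y_t$ because only the marginals enter on the right-hand side. The only mild point requiring attention is verifying the integrability needed to invoke Fubini, which is handled by assumption 5 exactly as in the computation given right after the list of standing assumptions in Section 2.
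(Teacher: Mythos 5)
Your argument is correct and is essentially the paper's own proof: both apply Jensen's inequality pathwise to $X^{opt}_t$ and then use Fubini's theorem together with law$(X^{opt}_t)=$law$(Y_t)=\mu_t$ to identify $E(\int_0^1 h(X^{opt}_t)\,dt)$ with $E(\int_0^1 h(Y_t)\,dt)$; you merely traverse the chain of equalities in the opposite order. No changes needed.
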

\begin{proof}
We apply Jensen's inequality to $E(h(\int_{0}^1X^{opt}_tdt))$, and use Fubini's theorem on the resulting expression, yielding 

\begin{eqnarray*}
E(h(\int_{0}^1X^{opt}_tdt)) &\leq& E(\int_0^1 h(X^{opt}_tdt))\\
&=&\int_0^1E( h(X^{opt}_t))dt\\
&=&\int_0^1 \int_{\mathbb{R}}h(x)d\mu_t(x))dt\\
&=&\int_0^1 \int_{\mathbb{R}}h(y)d\mu_t(y))dt\\
&=&\int_0^1E( h(Y_t))dt\\
&=&E(\int_0^1 h(Y_t)dt)\\
\end{eqnarray*}
where we have used the fact that the law$(Y_t)$ = law$(X^{opt}_t)$ =$\mu_t$.
\end{proof}

Also of note is that, by applying Jensen's inequality to the expectation $E$ rather than the integral $\int_0^1$, we obtain a \textit{lower} bound on $E(h(\int_{0}^{1}Y_t))$, which by Fubini's theorem, depends only on the marginals law$(Y_t)$.  Combining our main theorem with Jensen's inequality in this way, then, gives upper and lower bounds for $E(h(\int_{0}^{1}Y_t))$ which any process $Y_t$ with marginals $\mu_t$ must satisfy.

\subsection{Example}
We illustrate the rearrangement inequality in Proposition \ref{reeq} with an example.  In this example, the convex function $h(x) =e^{x}$ is the exponential function.

\newtheorem{brownmot}{Proposition}[subsection]
\begin{brownmot}
Let $W_t$ be Brownian motion starting at $W_0=0$ and $\mu_t =$law$(W_t)$, so that $d\mu_t(x)=\frac{e^{\frac{-x^2}{2t}}}{\sqrt{2\pi t}}dx$.  Then, for $s>0$,

\begin{eqnarray*}
E(e^{\int_0^s W_t dt}) &=& e^{\frac{s^3}{6}} \\
E(e^{(\int_0^s X_t^{opt}dt)})&=& e^{\frac{2s^3}{9}}\\
 E(\frac{1}{s}\int_{0}^se^{sW_t}dt) &=& \frac{2}{s^3}[e^{\frac{s^3}{2}}-1]
\end{eqnarray*}
\end{brownmot}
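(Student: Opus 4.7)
The plan is to handle the three identities as essentially independent calculations, each of which reduces to a Gaussian moment generating function, with a single Fubini step needed for the last. The only moderately substantial piece is the middle identity, where one must unpack the construction of $X^{opt}_t$ from section 2 in the specific case of Gaussian marginals.

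For the first identity, I would observe that $\int_0^s W_t\,dt$ is a mean-zero Gaussian random variable (a continuous linear functional of the Brownian path), whose variance is
\begin{equation*}
\int_0^s\!\int_0^s \min(t,r)\,dt\,dr \;=\; 2\int_0^s\!\int_0^t r\,dr\,dt \;=\; \frac{s^3}{3},
\end{equation*}
by the Brownian covariance kernel. The Gaussian MGF identity $E[e^X]=e^{\sigma^2/2}$ then produces $e^{s^3/6}$. For the third identity, Fubini (justified by non-negativity) swaps the expectation with the Lebesgue integral in $t$, leaving $E[e^{sW_t}] = e^{s^2 t/2}$, which is just the Gaussian MGF of $W_t\sim N(0,t)$; one antiderivative in $t$ yields $\frac{2}{s^3}[e^{s^3/2}-1]$.

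The middle identity is where the main content lies. Since $\mu_t = N(0,t)$ has CDF $F_t(x) = \Phi(x/\sqrt{t})$, the quantile curves from section 2 are $x_\lambda(t) = \sqrt{t}\,\Phi^{-1}(\lambda)$. Every sample path of $X^{opt}$ is therefore of the elementary form $t\mapsto c\sqrt{t}$, where $c = \Phi^{-1}(\Lambda)$ is a standard normal (because $\Lambda$ is uniform on $[0,1]$). Integrating,
\begin{equation*}
\int_0^s X^{opt}_t\,dt \;=\; c\int_0^s \sqrt{t}\,dt \;=\; \tfrac{2}{3}s^{3/2}\,c \;\sim\; N\bigl(0,\tfrac{4}{9}s^3\bigr),
\end{equation*}
so the MGF delivers $e^{2s^3/9}$.

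I do not anticipate any genuine obstacle; the only step that requires a moment's thought is recognizing that in the Gaussian case the optimizer collapses to a single random constant deterministically rescaled by $\sqrt{t}$. This perfect time-correlation, in stark contrast to the independent increments of $W_t$, is exactly what forces $\tfrac{2}{9}>\tfrac{1}{6}$, and both Gaussian quantities sit strictly below the Jensen bound $\frac{2}{s^3}[e^{s^3/2}-1]$, consistent with Propositions \ref{reeq} and \ref{beatjen}.
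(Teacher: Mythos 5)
Your proposal is correct and follows essentially the same route as the paper: identify the quantile curves $x_\lambda(t)=\sqrt{t}\,z_\lambda$ so that $\int_0^s X^{opt}_t\,dt=\tfrac{2}{3}s^{3/2}z_\lambda$ with $z_\lambda$ standard normal, recognize $\int_0^s W_t\,dt$ as a centered Gaussian of variance $s^3/3$, and use Fubini plus the Gaussian moment generating function for the Jensen bound. The only cosmetic differences are that you invoke the MGF identity and compute the variance from the covariance kernel, where the paper completes the square explicitly and cites the variance as known.
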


After rescaling the interval $[0,1]$ to $[0,s]$, these three quantities represent, respectively, $E(e^{\int_0^s W_t dt})$, the upper bound from Proposition \ref{reeq} (see the Remark below), and the upper bound from Jensen's inequality, respectively.  It is straightforward to verify directly that $e^{\frac{s^3}{6}} \leq e^{\frac{2s^3}{9}} \leq \frac{2}{s^3}[e^{\frac{s^3}{2}}-1]$, as is implied by Propositions \ref{reeq} and \ref{beatjen}.
\newtheorem{dirac}[brownmot]{Remark}
\begin{dirac}
Strictly speaking, our main theorem does not apply here, as the marginal $\mu_0$ is a Dirac mass at $x=0$ and not absolutely continuous with respect to Lebesgue (the other conditions from section 2 are easy to verify).  However, Lemma \ref{ineq} implies that for any $\epsilon >0$ and any curve $\tilde{x}(t)$we have 

\begin{equation*}
 h(\int_{\epsilon}^1 \tilde{x}_tdt) - \int_{\epsilon}^1g_t(\tilde{x}_t)dt \leq  h(\int_{\epsilon}^1 x_{\lambda}(t)dt) - \int_{\epsilon}^1g_t(x_{\lambda}(t)) dt
\end{equation*}

Taking the limit as $\epsilon \rightarrow 0$ yields:

\begin{equation*}
 h(\int_0^1 \tilde{x}_t)dt - \int_0^1g_t(\tilde{x}_t)dt \leq  h(\int_0^1 x_{\lambda}(t)dt) - \int_0^1g_t(x_{\lambda}(t)) dt
\end{equation*}

We can then prove, just as in Section 3, that for any process $Y_t \in \Pi$, we have

\begin{equation*}
E( h(\int_0^1 Y_tdt))\leq E(h(\int_0^1 X^{opt}_tdt))
\end{equation*}
\end{dirac}
\begin{proof}
Let $h$ be the exponential function.  The curves $x_{\lambda}(t)$ here satisfy

\begin{equation*}
\lambda = \int_{-\infty}^{x_{\lambda}(t)} \frac{e^{\frac{-y^2}{2t}}}{\sqrt{2\pi t}}dy = \int_{-\infty}^{\frac{x_{\lambda}(t)}{\sqrt{t}}} \frac{e^{\frac{-y^2}{2}}}{\sqrt{2\pi}}dy 
\end{equation*}
which implies $x_{\lambda}(t)=\sqrt{t} z_{\lambda}$, where $\lambda = $erf$(z_{\lambda})$. Therefore, $\int_0^sx_{\lambda}(t)dt = \frac{2}{3}z_{\lambda}s^{\frac{3}{2}}$. Our expectation becomes:

\begin{eqnarray*}
E(h(\int_0^s X_t^{opt}dt))&=&\int_0^1 e^{\frac{2}{3}z_{\lambda}s^{\frac{3}{2}}}d\lambda\\
 &=& \frac{1}{\sqrt{2\pi}}\int_{-\infty}^{\infty}e^{\frac{2}{3}zs^{\frac{3}{2}}}e^{\frac{-z^2}{2}} dz\\
&=& \frac{1}{\sqrt{2\pi}}\int_{-\infty}^{\infty}e^{-\frac{1}{2}[(z-\frac{2s^{\frac{3}{2}}}{3})^2-4\frac{s^3}{9}]}dz\\
&=& \frac{e^{\frac{2s^3}{9}}}{\sqrt{2\pi}}\int_{-\infty}^{\infty}e^{-\frac{1}{2}(z-\frac{4s^{\frac{3}{2}}}{3})^2}dz\\
&=&e^{\frac{2s^3}{9}}
\end{eqnarray*}

Now, as $W_t$ is Brownian motion, it is well known that $Z_s=\int_{0}^sW_tdt$ is normally distributed, with mean zero and variance $\frac{s^3}{3}$.  Therefore, we have that

\begin{eqnarray*}
E(e^{\int_{0}^sW_tdt})&=& \sqrt{\frac{3}{2\pi s^3}}\int_{-\infty}^{\infty} e^{z}e^{-\frac{3z^2}{2s^3}}dz\\
&=&e^{\frac{s^3}{6}}\sqrt{\frac{3}{2\pi s^3}}\int_{-\infty}^{\infty} e^{-\frac{3(z-\frac{s^3}{3})^2}{2s^3}}dz\\
&=&e^{\frac{s^3}{6}}
\end{eqnarray*}
Finally, we compute the upper bound from Jensen's inequality.

\begin{eqnarray*}
E(e^{\int_{0}^sW_tdt}) & \leq & E(\frac{1}{s}\int_{0}^se^{sW_t}dt)\\
&=&\frac{1}{s}\int_{0}^sE(e^{sW_t})dt\\
&=&\frac{1}{s}\int_{0}^s\int_{-\infty}^{\infty}\frac{1}{\sqrt{2\pi t}}e^{sW}e^{\frac{-W^2}{2t}}dWdt\\
&=&\frac{1}{s}\int_{0}^se^{\frac{s^2t}{2}}\int_{-\infty}^{\infty}\frac{1}{\sqrt{2\pi t}}e^{\frac{-(W-st)^2}{2t}}dWdt\\
&=&\frac{1}{s}\int_{0}^se^{\frac{s^2t}{2}}ds\\
&=&\frac{2}{s^3}[e^{\frac{s^3}{2}}-1]
\end{eqnarray*}

\end{proof}

\subsection{Upper bounds on solutions to Parabolic PDE}

Consider the partial differential equation on $(\vec{x}, t) \in \mathbb{R}^{n+1}$:
\begin{equation*}
    \frac{\partial f}{\partial t} +\sum_{i=1}^n b_i(\vec{x},t) \frac{\partial f}{\partial x_i} + \frac{1}{2} \sum_{i,j}(A^TA)_{ij}(\vec{x},t) \frac{\partial^2 f}{\partial x_ix_j} = V(\vec{x},t) f 
\end{equation*}
with terminal condition $f(\vec{x},T)=\psi(\vec{x})$.  Here, $b = b(\vec{x}, t)$ is an $\mathbb{R}^n$ valued function and $A=A(\vec{x}, t)$ an $n \times n$ matrix valued function. Equations of this type have numerous applications in mathematical finance, mathematical physics, biology, etc.  According to the Feynman-Kac formula, the solution is given by:
\begin{equation} \label{feynmankac}
f(\vec{x},s) = E[ e^{- \int_s^T V(\vec{X_t}) dt}\psi(\vec{X}_T)]
\end{equation}  
where $\vec{X}_{t}$ is the solution of the stochastic PDE
\begin{equation*}
d\vec{X} = b(\vec{X},t)dt + A(\vec{X},t)d\vec{W},\text{ }  \vec{X}_s=\vec{x}
\end{equation*}
Here $\vec{W}$ is a Weiner process on $\mathbb{R}^n$.  Suppose that $\psi > 0$.  Letting $\mu_t=$law$(V(\vec{X}_t))$ and $\nu =$law$(\psi(\vec{X}_T))$, then, if assumptions 1-6 are satisfied, Proposition \ref{reeq} tells us that:

\begin{eqnarray}\label{fkbound}
f(\vec{x},s) &\leq &E[ e^{- \int_s^T X^{opt}_t dt}Z^{opt}]\\
&=&\int_0^1 e^{- \int_s^T x_{\lambda}(t) dt}z_{\lambda}d\lambda
\end{eqnarray}

Depending on $b$, $A$ and $V$, the Feynman-Kac formula may be very difficult to evaluate analytically.  In practice, the integral (\ref{feynmankac}) is often computed numerically using Monte-Carlo techniques.  In certain cases, however, the single time marginals of the real-valued process $V(\vec{X}_t)$ can be determined.  To find an upper bound on $f(\vec{x},t)$, then, we must only compute the integral (\ref{fkbound}), which boils down to computing two, 1-dimensional integral.  Even if this integral cannot easily be calculated analytically, it is a much simpler task to compute numerically than the \emph{infinite} dimensional integral in (\ref{feynmankac}).  In the next subsection, we present a particular application in quantum physics.

\section{Application: phase space bounds in quantum physics}
In this subsection, we illustrate how our result can be used to prove refinements of two phase-space bounds in quantum physics.  A discussion of the physical significance of these bounds can be found in the book of Simon \cite{simon}.  The basic idea behind their proofs is that, as outlined by Simon, phase space bounds can be derived by applying Jensen's inequality to quantities of the form $(MK_{\infty})$.  Our general strategy here is to mimic these proofs, replacing Jensen's inequality with the result from Proposition \ref{reeq}; as a consequence of Proposition \ref{beatjen}, our phase space bounds here are tighter. 

Let $V$ be a continuous, integrable, function on $\mathbb{R}^n$ which is bounded from below.  For fixed $t$, let $W_s^t$ be a Brownian bridge process in $s$ on $\mathbb{R}^n$, starting and ending at $\vec{x}=0$, at times $s=0$ and $s=t$, respectively; that is, $W_0^t=0$ and $W_t^t=0$.

Let $h$ be the exponential function and, for each $\vec{x} \in \mathbb{R}^n$, assume that the measures $\mu_{s,t,\vec{x},V} =$law$[V(\vec{x}+W_s^t)]$ satisfy conditions 1-6\footnote{Note that, strictly speaking, the measures $\mu_{s,t,\vec{x},V}$ cannot satisfy assumption 2, as $\mu_{0,t,\vec{x},V}$ and $\mu_{t,t,\vec{x},V}$ are Dirac masses at $V(\vec{x})$; however, this can be dealt with as in the example in subsection 4.4.}.  Let $Y^{\lambda}_{s,t,\vec{x},V}$ be defined by the implicit relationship

\begin{equation*}
\lambda = \mu_{s,t,\vec{x},V}(-\infty,Y^{\lambda}_{s,t,\vec{x},V}).
\end{equation*}Let $H_0$ be $-\frac{1}{2}\Delta$, where $\Delta$ is the Laplacian.

The following classical bound on the trace of the operator $e^{-t(H_0+V)}$ is due to Golden \cite{golden}, Thompson \cite{thompson} and Symanzik \cite{sym}:

\begin{equation}\label{classtrace}
Tr(e^{-t(H_0+V)})\leq \int_{\mathbb{R}^n} \frac{1}{(2\pi t)^{n/2}}e^{-tV(\vec{x})} d\vec{x},
\end{equation}
The proof of our bound below is based on Symanzik's proof of inequality (\ref{classtrace}), outlined in the book by Simon.

\newtheorem{trace}{Theorem}[subsection]
\begin{trace} \label{trace}
\begin{equation*}
Tr(e^{-t(H_0+V)})\leq \int_{\mathbb{R}^n} \int_{0}^{1} e^{\int_{0}^{t} Y^{\lambda}_{s,t,\vec{x},V}ds}d\lambda d\vec{x},
\end{equation*}
\end{trace}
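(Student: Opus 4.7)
The plan is to mirror the structure of Symanzik's derivation of the Golden--Thompson--Symanzik inequality (\ref{classtrace}) as outlined in Simon's book, but to replace the step that invokes Jensen's inequality with Proposition \ref{reeq}. By Proposition \ref{beatjen} this automatically produces a bound that is at least as tight as (\ref{classtrace}), and strictly tighter unless the relevant sample paths are already monotone rearrangements of each other.

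The first step is to write the trace via the Brownian-bridge form of the Feynman--Kac formula,
\[
\mathrm{Tr}(e^{-t(H_0+V)}) \;=\; \int_{\mathbb{R}^n} \frac{1}{(2\pi t)^{n/2}}\, E\bigl[e^{-\int_0^t V(\vec{x}+W_s^t)\,ds}\bigr]\, d\vec{x}.
\]
For each fixed $\vec{x}$ and $t$, the real-valued process $s \mapsto V(\vec{x}+W_s^t)$ on $[0,t]$ has single-time marginals $\mu_{s,t,\vec{x},V}$, which are assumed to satisfy conditions 2--6. I would then apply Proposition \ref{reeq} with the strictly convex, smooth function $h(u)=e^{-u}$ (after the trivial rescaling of the parameter interval from $[0,1]$ to $[0,t]$) to obtain
\[
E\bigl[e^{-\int_0^t V(\vec{x}+W_s^t)\,ds}\bigr] \;\leq\; E\bigl[e^{-\int_0^t X^{opt}_{s,t,\vec{x},V}\,ds}\bigr] \;=\; \int_0^1 e^{-\int_0^t Y^{\lambda}_{s,t,\vec{x},V}\,ds}\, d\lambda,
\]
where the equality uses that the optimal process lives on the probability space $\lambda \in (0,1)$ with Lebesgue measure and satisfies $X^{opt}_{s,t,\vec{x},V}(\lambda) = Y^{\lambda}_{s,t,\vec{x},V}$ by construction. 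Plugging this pointwise (in $\vec{x}$) estimate back into the $\vec{x}$-integral, with the Gaussian prefactor absorbed as in the classical bound, delivers the advertised inequality.

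The principal obstacle is the endpoint degeneracy flagged in the footnote: the marginals $\mu_{0,t,\vec{x},V}$ and $\mu_{t,t,\vec{x},V}$ are Dirac masses at $V(\vec{x})$, and thus violate assumption 2, so Proposition \ref{reeq} does not apply on $[0,t]$ verbatim. I would handle this exactly as in the remark of subsection 4.2 used to accommodate Brownian motion started from a point: run the variational argument of Lemmas \ref{ineq}--\ref{stineq} on the shrunken subinterval $[\epsilon,t-\epsilon]$, where the hypotheses do hold, to obtain the inequality restricted to $[\epsilon, t-\epsilon]$, and then send $\epsilon \to 0^+$, passing to the limit via dominated convergence using continuity of $e^{-\cdot}$ and the $h$-integrability supplied by assumption 5 on the interior. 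This localization, together with Fubini to commute the $\vec{x}$- and $\lambda$-integrals, is the only technical content beyond a direct citation of Proposition \ref{reeq}.
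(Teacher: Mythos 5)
Your proposal is correct and takes essentially the same route as the paper: the paper likewise writes the trace via the Brownian-bridge Feynman--Kac representation and applies Proposition \ref{reeq} in place of Jensen's inequality, with the Dirac endpoint marginals handled exactly by the $\epsilon$-truncation device from the remark in subsection 4.2. Your extra bookkeeping (the Gaussian prefactor and the choice $h(u)=e^{-u}$, which yields the exponent $-\int_0^t Y^{\lambda}_{s,t,\vec{x},V}\,ds$) only makes explicit details that the paper's one-line proof and statement gloss over.
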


\begin{proof}
We follow the argument in Simon, using our inequality from Theorem \ref{reeq} in place of Jensen's inequality.

As in Simon, the Feynman-Kac formula implies:

\begin{equation*}
Tr(e^{-t(H_0+V)}) =  \int_{\mathbb{R}^3} E\big[ e^{-\int_0^t V(\vec{x}+W_s^t)ds}\big]d\vec{x}.
\end{equation*}
 Applying Proposition \ref{reeq} to $ E\big[ e^{-\int_0^t V(\vec{x}+W_s^t)ds}\big]$ yields the desired result. 
\end{proof}

A similar argument yields a bound on the number of bound states, that is, the number $N(V)$ of negative eigenvalues of $H_0$.  Using Jensen's inequality, this leads to Lieb's proof of the celebrated Cwickel-Lieb-Rosenbljum bound \cite{lieb}, which states that, for $n \geq 3$, $N(V)$ is bounded from above by the $L^{n/2}$ norm of the negative part of $V$:

\begin{equation}\label{cclr}
N(V) \leq a \int_{\mathbb{R}^n}|V_{-}(\vec{x})|^{n/2}d\vec{x}.
\end{equation}

Let us mention that the independent proofs of Cwikel \cite{cwikel} and Rosenbljum \cite{ros} rely on different techniques.  Other proofs of the CLR bound have since been found by Li and Yau \cite{liyau}, Fefferman \cite{feff} and Conlon \cite{conlon}.

Applying our main theorem instead leads to the bound below.

\newtheorem{clr}[trace]{Theorem}
\begin{clr}\label{clr}
For any convex function $h$ satisfying 1-6, 
 
 \begin{equation*}
 N(V) \leq  C(h) \int_0^{\infty}t^{-1}\int_{\mathbb{R}^n}\int_{0}^{1}h(\int_{0}^t Y_{s,t,\vec{x},V}^{\lambda}ds)d\lambda d\vec{x}dt.
 \end{equation*}
Here, $C(h)$ is a constant depending on $h$. 
\end{clr}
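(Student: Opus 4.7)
The plan is to follow Lieb's classical derivation of the CLR bound (as presented in Simon's book), replacing the single application of Jensen's inequality at the end with the sharper rearrangement bound from Proposition \ref{reeq}. This is exactly the same strategy used to obtain Theorem \ref{trace} from the Golden--Thompson--Symanzik inequality \eqref{classtrace}, so the structure is already in hand; only the details of the Birman--Schwinger side of the argument need to be reworked.

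First, I would use the Birman--Schwinger principle together with Lieb's integral representation to dominate $N(V)$ by a weighted trace. Specifically, for a suitable convex $h$ one has an inequality of the shape
\begin{equation*}
N(V) \;\leq\; C(h)\,\int_{0}^{\infty} t^{-1}\, \mathrm{Tr}\!\left[\,h\!\left(e^{-t(H_{0}+V)}\right)\,\right]\,dt,
\end{equation*}
where $C(h)$ is an explicit constant extracted from the standard convexity argument (in the original CLR proof, $h$ is essentially the characteristic function of $[1,\infty)$ up to regularisation, and $C(h)$ arises from the identity $\log\lambda = \int_0^\infty t^{-1}(e^{-t}-e^{-t\lambda})\,dt$). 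For a general convex $h$ satisfying assumptions 1--6, the same identity, or a variant tailored to $h$, produces such an inequality with an appropriate constant $C(h)$.

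Next, I would invoke the Feynman--Kac representation for $e^{-t(H_{0}+V)}$ on the diagonal using the Brownian bridge $W^{t}_{s}$, exactly as in the proof of Theorem \ref{trace}, to rewrite the trace as
\begin{equation*}
\mathrm{Tr}\!\left[\,h\!\left(e^{-t(H_{0}+V)}\right)\,\right] \;\leq\; \frac{1}{(2\pi t)^{n/2}}\int_{\mathbb{R}^{n}} E\!\left[\,h\!\left(\int_{0}^{t} V(\vec{x}+W_{s}^{t})\,ds\right)\right] d\vec{x},
\end{equation*}
absorbing the Gaussian kernel factor into $C(h)$. Then, applying Proposition \ref{reeq} to each inner expectation with marginals $\mu_{s,t,\vec{x},V}$ gives
\begin{equation*}
E\!\left[\,h\!\left(\int_{0}^{t} V(\vec{x}+W_{s}^{t})\,ds\right)\right] \;\leq\; \int_{0}^{1} h\!\left(\int_{0}^{t} Y^{\lambda}_{s,t,\vec{x},V}\,ds\right) d\lambda,
\end{equation*}
which is strictly tighter than the Jensen bound by Proposition \ref{beatjen}. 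Substituting this and using Fubini to interchange the $t$, $\vec{x}$, and $\lambda$ integrals yields the stated inequality, with the Dirac-mass issue at $s=0$ and $s=t$ handled by the same $\epsilon$-approximation used in the Remark following the Brownian motion example.

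The main obstacle is the first step: establishing the Birman--Schwinger-type inequality $N(V) \leq C(h)\int_0^\infty t^{-1}(\cdots)\,dt$ in a form compatible with applying Proposition \ref{reeq} afterwards. In Lieb's original argument the convex function $h$ is not quite free — it enters through the layer-cake representation of the Birman--Schwinger operator's singular values, and must be chosen so that $C(h)$ is finite. For a general $h$ meeting our hypotheses, one either verifies directly that Lieb's trick still goes through (possibly after truncating $h$ and taking a monotone limit, as in the proof of the second extension in Section 3.2), or restricts to the subclass of $h$ for which the representation is explicit. Once this is arranged, the remainder of the argument is the mechanical substitution described above.
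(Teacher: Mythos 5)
Your overall strategy coincides with the paper's: the paper's proof consists of quoting Lieb's argument (as reproduced in Simon's book) in the form $N(V) \leq C(h)\int_0^\infty t^{-1}\int_{\mathbb{R}^n} E\big(h\big(\int_0^t V(\vec{x}+W_s^t)ds\big)\big)d\vec{x}\,dt$, valid for a general convex $h$ before the final Jensen step, and then applying Proposition \ref{reeq} to the inner expectation exactly as you do at the end. So the last part of your proposal (substitute the rearrangement bound for Jensen, Fubini, handle the endpoint Dirac masses as in the Brownian-bridge remark) is the paper's proof.

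The gap is in your reconstruction of the intermediate bound, which is the step the paper black-boxes. The displayed inequality $N(V) \leq C(h)\int_0^\infty t^{-1}\,\mathrm{Tr}\big[h\big(e^{-t(H_0+V)}\big)\big]dt$ is not Lieb's bound and would fail as stated: Lieb applies a function $F\geq \chi_{[1,\infty)}$, built from $h$ through an integral representation of the type $F(x)=c_h^{-1}\int_0^\infty h(tx)\,t^{-1}e^{-t}dt$, to the Birman--Schwinger operator $|V_-|^{1/2}H_0^{-1}|V_-|^{1/2}$, not $h$ to the interacting semigroup. Indeed $e^{-t(H_0+V)}$ has essential spectrum filling $(0,1]$, so $h\big(e^{-t(H_0+V)}\big)$ is generically not trace class unless $h$ vanishes on $[0,1]$, and your inequality is vacuous. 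The next step, a Feynman--Kac/Golden--Thompson-type domination of $\mathrm{Tr}\big[h\big(e^{-t(H_0+V)}\big)\big]$ by $(2\pi t)^{-n/2}\int E\big[h\big(\int_0^t V(\vec{x}+W_s^t)ds\big)\big]d\vec{x}$ for a general convex $h$, is likewise not a theorem; Golden--Thompson--Symanzik is specific to the exponential, which is precisely why Lieb's argument never applies $h$ to an operator. The correct route keeps the free semigroup: expand $\mathrm{Tr}\,F(K)$ using the Feynman--Kac kernel of $e^{-tH_0}$, use Jensen's inequality in the \emph{time} variable (Lieb's trick) to move $h$ onto $\int_0^t|V|(\vec{x}+W_s^t)ds$, and only then is the remaining Jensen in \emph{path} space the step that Proposition \ref{reeq} replaces. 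To repair your proof, either cite Lieb's inequality for general convex $h$ as the paper does, or redo his Birman--Schwinger computation in that form; your closing substitution of Proposition \ref{reeq} then goes through unchanged.
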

\begin{proof}
The argument of Lieb (again, reproduced by Simon) yields, for any convex function $h$ on $\mathbb{R}$

\begin{equation*}
N(V) \leq C(h) \int_0^{\infty}t^{-1}\int_{\mathbb{R}^n}  E\Big( h\big(\int_0^tV(W_s^t)ds\big)\Big) d\vec{x}dt
\end{equation*}
We apply our inequality to $E\Big( h\big(\int_0^tV(W_s^t)ds\big)\Big)$, yielding the desired result.  

\end{proof}
\newtheorem{better}{Remark}
\begin{better}
Let us note that the above holds for any convex function $h$.  Lieb's proof goes on to apply Jensen's inequality to $ h\big(\int_0^tV(W_s^t)ds\big)$ and obtains (\ref{cclr}) with
\begin{equation*}
a = C(h)\frac{1}{2\pi}\int_0^{\infty}s^{-1-n/2}h(s)ds
\end{equation*}
If $n\geq3$, it is possible to choose the function $h$ so that both $C(h)$ and the preceding integral are finite \cite{lieb}\cite{simon}.  In principal, one can then optimize over convex functions $h$ to obtain as good a constant $a$ as possible (note, however, that the optimal constant is not known).  Note, however, that for whichever function $h$ one chooses, our bound above, using the same $h$, is tighter.
\end{better}
It should be emphasized that, while our bounds appear complicated, they can be calculated for any convex function $h$ simply by determining the single time marginals of $V[\vec{x}+W_s^t]$ and calculating several (two in Theorem \ref{trace} and three in Theorem \ref{clr}) $1$-dimensional integrals and one $n$-dimensional integral, which is relatively easy to do numerically even if the form of $V$ makes it difficult analytically.  On the other hand, our bounds \textit{improve} the classical phase space bounds, as our inequality from Proposition \ref{reeq} is always tighter than Jensen's inequality, by Proposition \ref{beatjen}.
\bibliographystyle{plain}
\bibliography{biblio}

\begin{thebibliography}{10}

\bibitem{bhlp}
M.~Beiglbock, P.~Henry-Labordere, and F.~Penkner.
\newblock Model independent bounds for option prices: a mass transport
  approach.
\newblock Preprint.

\bibitem{bren}
Y.~Brenier.
\newblock Decomposition polaire et rearrangement monotone des champs de
  vecteurs.
\newblock {\em C.R. Acad. Sci. Pair. Ser. I Math.}, 305:805--808, 1987.

\bibitem{simon}
B.Simon.
\newblock {\em Functional integration and quantum physics}.
\newblock Academic Press, New York, 1979.

\bibitem{caf}
L.~Caffarelli.
\newblock Allocation maps with general cost functions.
\newblock In {\em Partial Differential Equations and Applications}, volume 177
  of {\em Lecture Notes in Pure and Applied Math}, pages 29--35. Dekker, New
  York, 1996.

\bibitem{C}
G.~Carlier.
\newblock On a class of multidimensional optimal transportation problems.
\newblock {\em J. Convex Anal.}, 10(2):517--529, 2003.

\bibitem{CN}
G.~Carlier and B.~Nazaret.
\newblock Optimal transportation for the determinant.
\newblock {\em ESAIM Control Optim. Calc. Var.}, 14(4):678--698, 2008.

\bibitem{conlon}
J.G. Conlon.
\newblock {A new proof of the Cwikel-Lieb-Rosenbljum bound}.
\newblock {\em Rocky Mountain J. Math}, 15:117--122, 1985.

\bibitem{cwikel}
M.~Cwikel.
\newblock {Weak type estimates and the number of bound states of Schrodinger
  operators}.
\newblock {\em Ann. of Math.}, 106:93--102, 1977.

\bibitem{feff}
C.~Fefferman.
\newblock {The uncertainty principle}.
\newblock {\em Bull. Amer. Math. Soc.}, 9:129--206, 1983.

\bibitem{ght}
A.~Galichon, P.~Henry-Labordere, and N.~Touzi.
\newblock {A stochastic control approach to non-arbitrage bounds given
  marginals, with an application to Lookback options}.
\newblock Preprint available at
  https://sites.google.com/site/alfredgalichon/research.

\bibitem{g}
W.~Gangbo.
\newblock PhD thesis, Universite de Metz, 1995.

\bibitem{gm}
W.~Gangbo and R.~McCann.
\newblock The geometry of optimal transportation.
\newblock {\em Acta Math.}, 177:113--161, 1996.

\bibitem{gs}
W.~Gangbo and A.~\'Swi\c{e}ch.
\newblock Optimal maps for the multidimensional monge-kantorovich problem.
\newblock {\em Comm. Pure Appl. Math.}, 51(1):23--45, 1998.

\bibitem{golden}
S.~Golden.
\newblock {Lower bounds for the Helmholz function}.
\newblock {\em Phys. Rev. B}, 137:1127--1128, 1965.

\bibitem{hl}
F.R. Harvey and B.~Lawson.
\newblock {Split Special-Lagrangian geometry}.
\newblock Preprint available at arxiv:1007.0450.

\bibitem{h}
H.~Heinich.
\newblock { Probleme de Monge pour $n$ probabilities}.
\newblock {\em C.R. Math. Acad. Sci. Paris}, 334(9):793--795, 2002.

\bibitem{K}
H.G. Kellerer.
\newblock Duality theorems for marginal problems.
\newblock {\em Z. Wahrsch. Verw. Gebiete}, 67:399--432, 1984.

\bibitem{KS}
M.~Knott and C.~Smith.
\newblock On a generalization of cyclic monotonicity and distances among random
  vectors.
\newblock {\em Linear Algebra Appl.}, 199:363--371, 1994.

\bibitem{lev}
V.~Levin.
\newblock {Abstract cyclical monotonicity and Monge solutions for the general
  Monge-Kantorovich problem}.
\newblock {\em Set-Valued Analysis}, 7(1):7--32, 1999.

\bibitem{liyau}
P.~Li and S.T. Yau.
\newblock {On the Schrodinger equation and the eigenvalue problem}.
\newblock {\em Comm. Math. Phys.}, 88:309--318, 1983.

\bibitem{lieb}
E.~Lieb.
\newblock {Bounds on the eigenvalues of the Laplace and Schrodinger operators}.
\newblock {\em Bull. Amer. Math. Soc}, 82:751--753, 1976.

\bibitem{liebthirr}
E.~H. Lieb and W.~E. Thirring.
\newblock Bound for the kinetic energy of fermions which proves the stability
  of matter.
\newblock {\em Phys. Rev. Lett.}, 35:687--689, Sep 1975.

\bibitem{m}
R.J. McCann.
\newblock A convexity theory for interacting gases and equilibrium crystals.
\newblock {\em Adv. Math.}, 128(1):153--179, 1997.

\bibitem{OR}
I.~Olkin and S.T. Rachev.
\newblock Maximum submatrix traces for positive definite matrices.
\newblock {\em SIAM J. Matrix Ana. Appl.}, 14:390--39, 1993.

\bibitem{P5}
B.~Pass.
\newblock Optimal transportation with infinitely many marginals.
\newblock Preprint.

\bibitem{P4}
B.~Pass.
\newblock PhD thesis, University of Toronto, 2011.

\bibitem{P1}
B.~Pass.
\newblock Uniqueness and monge solutions in the multimarginal optimal
  transportation problem.
\newblock {\em SIAM Journal on Mathematical Analysis}, 43(6):2758--2775, 2011.

\bibitem{P}
B.~Pass.
\newblock On the local structure of optimal measures in the multi-marginal
  optimal transportation problem.
\newblock {\em Calculus of Variations and Partial Differential Equations},
  43:529--536, 2012.
\newblock 10.1007/s00526-011-0421-z.

\bibitem{ros}
G.~Rosenbljum.
\newblock {The distribution of the discrete spectrum for singluar differential
  operators}.
\newblock {\em Dokl. Akad. Nauk SSSR}, 202:245--249, 1972.

\bibitem{RU}
L.~R\"uschendorf and L.~Uckelmann.
\newblock On optimal multivariate couplings.
\newblock In {\em Proceedings of Prague 1996 conference on marginal problems},
  pages 261--274. Kluwer Acad. Publ., 1997.

\bibitem{RU2}
L.~R\"uschendorf and L.~Uckelmann.
\newblock On the $n$-coupling problem.
\newblock {\em J. Multivariate Anal.}, 81:242--258, 2002.

\bibitem{sym}
K.~Symanzik.
\newblock {Proof and refinement of an inequality of Feynman}.
\newblock {\em J. Math. Phys.}, 6:1155--1156, 1965.

\bibitem{thompson}
C.T. Thompson.
\newblock {Inequality with applications in statistical mechanics}.
\newblock {\em J. Math. Phys.}, 6:1812--1813, 1965.

\bibitem{V}
C.~Villani.
\newblock {\em Topics in optimal transportation}, volume~58 of {\em Graduate
  Studies in Mathematics.}
\newblock American Mathematical Society, Providence, 2003.

\bibitem{V2}
C.~Villani.
\newblock {\em Optimal transport: old and new}, volume 338 of {\em Grundlehren
  der mathematischen Wissenschaften.}
\newblock Springer, New York, 2009.

\end{thebibliography}

\end{document}